\newtheorem{theorem}{Theorem}
\newtheorem{lemma}[theorem]{Lemma}
\newtheorem{proposition}[theorem]{Proposition}
\begin{document}
\title[]{Fischer decompositions for entire functions and the Dirichlet
problem for parabolas}
\author{H. Render and J. M. Aldaz}

\address{H. Render: School of Mathematical Sciences, University College
	Dublin, Dublin 4, Ireland.}
\email{hermann.render@ucd.ie}
\address{J.M. Aldaz:
 Instituto de Ciencias Matem\'aticas (CSIC-UAM-UC3M-UCM) and Departamento de  Matem\'aticas,
	Universidad  Aut\'onoma de Madrid, Cantoblanco 28049, Madrid, Spain.}
\email{jesus.munarriz@uam.es}
\email{jesus.munarriz@icmat.es}

\thanks{2020 Mathematics Subject Classification: \emph{Primary: 31B05}, \emph{Secondary: 35A20,35A10}}
\thanks{Key words and phrases: \emph{Fischer decomposition, Fischer pair, entire harmonic function}}


\maketitle

\begin{abstract}
Let $P_{2k}$ be a homogeneous polynomial of degree $2k$ and assume that
there exist $C>0$, $D>0$ and $\alpha \ge 0$ such that 
\begin{equation*}
\left\langle P_{2k}f_{m},f_{m}\right\rangle _{L^2(\mathbb{S}^{d-1})}\geq 
\frac{1}{C\left( m+D\right) ^{\alpha }}\left\langle f_{m},f_{m}\right\rangle
_{\mathbb{S}^{d-1}}
\end{equation*}%
for all homogeneous polynomials $f_{m}$ of degree $m.$ Assume that $P_{j}$
for $j=0, \dots ,\beta <2k$ are homogeneous polynomials of degree $j$. The
main result of the paper states that for any entire function $f$ of order $%
\rho <\left( 2k-\beta \right) /\alpha $ there exist entire functions $q$ and 
$h$ of order bounded by $\rho$ such that 
\begin{equation*}
f=\left( P_{2k}-P_{\beta }- \dots -P_{0}\right) q+h\text{ and }\Delta
^{h}r=0.
\end{equation*}%
This result is used to establish the existence of entire harmonic solutions
of the Dirichlet problem for parabola-shaped domains on the plane, with data
given by entire functions of order smaller than $\frac{1}{2}$.
\end{abstract}

\section{Introduction}

This paper is dedicated to the memory of Harold S. Shapiro (1928 - 2021),
whose scientific work has had a very high and long-lasting impact on the
mathematical community, easily recognizable in recent research. For a
description of H. S. Shapiro's main achievements, the interested reader is
referred to \cite{Khav22}.

Here we discuss a rather specific topic, studied and promoted by H. S.
Shapiro in his celebrated paper \cite{Shap89} (see also \cite{NeSh66} and 
\cite{NeSh68}), written in collaboration with \emph{D.J. Newman}). In \cite%
{Shap89} Shapiro introduced the notion of the \emph{Fischer decomposition}
of an entire function, which goes back to an old paper of Ernst Fischer 
\footnote{%
E. Fischer made important contributions to the development of abstract
Hilbert spaces (e.g. the Riesz-Fischer theorem), so, not surprisingly,
Hilbert space arguments play an important role in his paper. See \cite{Gonz}%
, p. 217 and p. 228, for some biographical comments and remarks.} from 1917
discussing the polynomial case, see \cite{Fisc17}. This method has many
applications: in \cite{EbSh95} and \cite{EbSh96} it was used to generalize
the \emph{Cauchy--Kowaleskaya theorem }to a much wider setting (see \cite%
{EbRe08} and \cite{EbRe08b} for further developments). In \cite{KhSh92} this
approach was used to explore analytical extension properties of solutions of
the Dirichlet problem for entire data, with respect to an ellipsoid and to
more general domains, see also \cite{Armi04}, \cite{AGV03}, \cite{AxRa95}, 
\cite{Ba}, \cite{ChSi01}, \cite{Eben92} \cite{EKS05} and \cite{HaSh94}. In 
\cite{KhSh92} the \emph{Khavinson-Shapiro conjecture} was stated, which
initiated a wide range of research activity, see (in alphabetical order) 
\cite{KL}, \cite{KhSt10}, \cite{Lund09}, \cite{LuRe11}, \cite{PutSty}, \cite%
{Rend08}, \cite{Rend16} and \cite{Rend17}. Further work on Fischer
decompositions can be found in \cite{ElNa12}, \cite{Khav97}, \cite{MeSt85}
and \cite{MeYg92}. In passing we note that Fischer decompositions also play
an important role in Clifford Analysis, see e.g. the \cite{BDS10} and the
references therein.

Let us recall some notations and terminology in order to formulate Fischer's
theorem: we denote by $\mathcal{P}\left( \mathbb{R}^{d}\right) $ the set of
all polynomials in the variable $x=\left( x_{1}, \dots ,x_{d}\right) \in 
\mathbb{R}^{d}$ with complex coefficients, by $\mathbb{N}_{0}$ the set of
natural numbers (emphasizing the fact that 0 is included), and by $\mathcal{P%
}_{m}\left( \mathbb{R}^{d}\right) $ the subspace of all homogeneous
polynomials of degree $m.$ Recall that a polynomial $P\left( x\right) $ is 
\emph{homogeneous} of degree $\alpha $ if $P\left( tx\right) =t^{\alpha
}P\left( x\right) $ for all $t>0$ and for all $x.$ Given a polynomial $%
P\left( x\right) $, we denote by $P^{\ast }\left( x\right) $ the polynomial
obtained from $P\left( x\right) $ by conjugating its coefficients, and by $%
P\left( D\right) $ be the linear differential operator obtained by replacing
the variable $x_{j}$ by the differential operator $\frac{\partial }{\partial
x_{j}}$. It is well known that a polynomial $P\left( x\right) $ of degree $k$
can be written as a sum of homogeneous polynomials $P_{j}\left( x\right) $
of degree $j$ for $j=0, \dots ,k,$ so 
\begin{equation*}
P\left( x\right) =P_{k}\left( x\right) +\cdots +P_{0}\left( x\right) ,
\end{equation*}%
and we call the homogeneous polynomial $P_{k}\left( x\right) $ the leading
term. Fischer's theorem states that given a \emph{homogeneous} polynomial $P$%
, the following decomposition holds: for each polynomial $f\in \mathcal{P}%
\left( \mathbb{R}^{d}\right) $ there exist \emph{unique} polynomials $q\in 
\mathcal{P}\left( \mathbb{R}^{d}\right) $ and $h\in \mathcal{P}\left( 
\mathbb{R}^{d}\right) $ such that%
\begin{equation*}
f=P\cdot q+h\text{ and }P^{\ast }\left( D\right) h=0.
\end{equation*}

We recall the standard notation for multi-indices $\alpha =\left( \alpha
_{1}, \dots ,\alpha _{d}\right) \in \mathbb{N}_{0}^{d}$: set $x^{\alpha
}=x_{1}^{\alpha _{1}} \dots x_{d}^{\alpha _{d}},$ $\alpha !=\alpha _{1}!
\dots \alpha _{d}!$, and $\left\vert \alpha \right\vert =\alpha _{1}+\cdots
+\alpha _{d}$. Let $P$ and $Q$ be given by%
\begin{equation*}
P\left( x\right) =\sum_{\alpha \in \mathbb{N}_{0}^{d},\left\vert \alpha
\right\vert \leq N}c_{\alpha }x^{\alpha }\text{ and }Q\left( x\right)
=\sum_{\alpha \in \mathbb{N}_{0}^{d},\left\vert \alpha \right\vert \leq
M}d_{\alpha }x^{\alpha },
\end{equation*}
where $c_\alpha, d_\alpha \in \mathbb{C}$.

An important ingredient in the proof of Fischer's Theorem is the \emph{%
Fischer inner product} $\left[ \cdot ,\cdot \right] _{F}$ on $\mathcal{P}%
\left( \mathbb{R}^{d}\right) $, defined by 
\begin{equation}
\left[ P,Q\right] _{F}:=\left( Q^*\left( D\right) P\right)
\;(0)=\sum_{\alpha \in \mathbb{N}_{0}^{d}}\alpha !c_{\alpha }\overline{%
d_{\alpha }}.  \label{eqQPD}
\end{equation}

The Fischer inner product has been used by many authors under different
names, see \cite{deRo92} and \cite{Saue00}, and the references therein. In 
\cite{NeSh66} and \cite{Shap89} the corresponding Hilbert space norm $\sqrt{%
\left[ P,P\right] _{F}}$ is called the \emph{Fischer norm}, while in \cite%
{Beau97} and \cite{Zeil} the term \emph{Bombieri norm} is used.

One aim in Shapiro's paper \cite{Shap89} is to provide Fischer
decompositions in a wider setting, going beyond the case of polynomials to
more general function spaces, in particular to the space $E\left( \mathbb{C}%
^{d}\right) $ of all entire functions $f:\mathbb{C}^{d}\rightarrow \mathbb{C}
$. It is convenient to adopt a notion introduced in \cite[p. 522]{Shap89};
suppose that $E$ is a vector space of infinitely differentiable functions $%
f:G\rightarrow \mathbb{C}$ (defined on an open subset $G$ in $\mathbb{R}^{d}$
or $\mathbb{C}^{d}$) that is a module over $\mathcal{P}\left( \mathbb{R}
^{d}\right) $: then we say that a polynomial $P$ and a differential operator 
$Q\left( D\right) $ form a \emph{Fischer pair for the space }$E$, if for
each $f\in E$ there exist \emph{unique} elements $q\in E$ and $h\in E$ such
that 
\begin{equation}
f=P\cdot q+h\text{ and }Q\left( D\right) h=0.  \label{eqDecomp}
\end{equation}%
Shapiro proved in \cite[Theorem 1]{Shap89} that Fischer's theorem is also
true when $\mathcal{P}\left( \mathbb{R}^{d}\right) $ is replaced by $E\left( 
\mathbb{C}^{d}\right) $. His approach is based on homogeneous expansions of
entire functions and estimates of the Fischer norms for homogeneous
polynomials.

Shapiro also raised the question whether other types of Fischer pairs could
be found. In \cite{Rend08} the first author identified new kinds of Fischer
pairs, related to the polyharmonic operator $\Delta ^{k}$, where $\Delta
^{k} $ is the $k$-th iterate of the Laplace operator 
\begin{equation*}
\Delta =\frac{\partial ^{2}}{\partial x_{1}^{2}}+ \cdots +\frac{\partial ^{2}%
}{\partial x_{d}^{2}}.
\end{equation*}%
It is shown in \cite{Rend08} that a polynomial $P\left( x\right) $ of degree 
$2k$ and the differential operator $Q\left( D\right) :=\Delta ^{k}$ form a
Fischer pair for $\mathcal{P}\left( \mathbb{R}^{d}\right) $ provided that
the leading term $P_{2k}$ is non-zero and \emph{\ non-negative} (i. e., $%
P_{2k}\ge 0$), thus guaranteeing Fischer decompositions for the pair $\left(
P,\Delta ^{k}\right) $ with respect to the vector space $\mathcal{P}\left( 
\mathbb{R}^{d}\right) .$ To pass from $\mathcal{P}\left( \mathbb{R}%
^{d}\right) $ to the space of entire functions $E\left( \mathbb{C}%
^{d}\right) $ is a non-trivial task which involves careful analysis: it is
shown in \cite{Rend08} that $\left( P,\Delta ^{k}\right) $ is Fischer pair
for $E\left( \mathbb{C}^{d}\right) $ if the leading polynomial $P_{2k}\left(
x\right) $ is elliptic, i.e., if there exists a constant $C>0$ such that 
\begin{equation*}
P_{2k}\left( x\right) \geq C\left\vert x\right\vert ^{2k}\text{ for all }%
x\in \mathbb{R}^{d}.
\end{equation*}
Thus a Fischer decomposition holds for \emph{entire} functions when $P_{2k}$
is elliptic.

In the present paper we want to discuss Fischer decompositions for entire
functions when we relax the assumption of ellipticity. We remark that while
the definition of ``Fischer pair'' entails the uniqueness of the
decomposition, we shall use \emph{Fischer decomposition} in a wider sense,
that includes the possibility of not having uniqueness.

Let us denote the unit sphere by 
\begin{equation*}
\mathbb{S}^{d-1} =\left\{ \theta \in \mathbb{R}^{d-1}:\left\vert \theta
\right\vert =1\right\},
\end{equation*}%
its surface area measure by $d\theta $, and its area by $\omega_{d - 1} = |%
\mathbb{S}^{d-1}|$. We define for $f,g\in $ $\mathcal{P}\left( \mathbb{R}%
^{d}\right) $ the inner product, together with its associated norm, 
\begin{equation}
\left\langle f,g\right\rangle _{L^2(\mathbb{S}^{d-1})}:=\int_{\mathbb{S}%
^{d-1}}f\left( \theta \right) \overline{g\left( \theta \right) }d\theta 
\text{ and }\left\Vert f\right\Vert _{L^2(\mathbb{S}^{d-1})}=\sqrt{%
\left\langle f,f\right\rangle _{L^2(\mathbb{S}^{d-1})}}.  \label{eqprodspher}
\end{equation}

Given a polynomial $P$, to avoid excessive subindices we shall denote by
both $P$ and $M_{P}$ the multiplication operator associated to $P$: for
every function $f$, we set $M_{P}(f):=Pf$.

With the convention that in the case $\alpha =0$, the expression $\left(
2k-\beta \right) /\alpha $ is to be interpreted as $\infty $, our first main
result states the following:

\begin{theorem}
\label{ThmMain1}Let $P_{2k}$ be a homogeneous polynomial of degree $2k>0$
such that there exist $C>0$, $D>0$ and $\alpha \geq 0$ with 
\begin{equation*}
\left\langle P_{2k}f_{m},f_{m}\right\rangle _{L^{2}(\mathbb{S}^{d-1})}\geq 
\frac{1}{C\left( m+D\right) ^{\alpha }}\left\langle f_{m},f_{m}\right\rangle
_{\mathbb{S}^{d-1}}
\end{equation*}%
for all homogeneous polynomials $f_{m}$ of degree $m.$ Let $0\leq \beta <2k$
and for $j=0,\dots ,\beta $, let the polynomials $P_{j}$ be homogeneous of
degree $j$. Then for every entire function $f$ of order $\rho <\left(
2k-\beta \right) /\alpha $, there exist entire functions $q$ and $r$ of
order $\leq \rho $ such that 
\begin{equation*}
f=\left( P_{2k}-P_{\beta }-\cdots -P_{0}\right) q+r\text{ and }\Delta
^{k}r=0.
\end{equation*}
\end{theorem}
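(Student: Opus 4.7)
Expand $f = \sum_m f_m$ into homogeneous parts. The coercivity hypothesis forces $P_{2k} \geq 0$ pointwise on $\mathbb{R}^d$ (otherwise an $f_m$ concentrated near a negative point of $P_{2k}$ would contradict the lower bound), so by \cite{Rend08} the pair $(P_{2k}, \Delta^{k})$ is a Fischer pair on $\mathcal{P}(\mathbb{R}^d)$. I use this to decompose each $f_m$ by a bootstrapping recursion, derive quantitative norm bounds from the spherical coercivity, and assemble the result so that $q := \sum_m q^{(m)}$ and $r := \sum_m h^{(m)}$ are entire of order $\leq \rho$.

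\textbf{Polynomial recursion with norm control.} Set $P := P_{2k} - P_\beta - \cdots - P_0$. For a homogeneous $g$ of degree $n$, the polynomial Fischer decomposition provides $g = P_{2k} u + h_n$ with $u \in \mathcal{P}_{n-2k}$ and $\Delta^{k} h_n = 0$; rewriting,
\[
g = Pu + h_n + \sum_{i=0}^{\beta} P_i u,
\]
where the remainder has degree $\leq n - (2k - \beta) < n$. Splitting the remainder into its homogeneous pieces and iterating, the maximum degree strictly decreases by at least $2k - \beta \geq 1$ per round; the process terminates on $f_m$ after $O\!\bigl(m/(2k-\beta)\bigr)$ rounds and yields polynomials $q^{(m)}$, $h^{(m)}$ with $f_m = Pq^{(m)} + h^{(m)}$ and $\Delta^{k} h^{(m)} = 0$. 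Combining the spherical coercivity with Cauchy--Schwarz gives the elementary step estimate $\|u\|_{L^2(\mathbb{S}^{d-1})} \leq C(n-2k+D)^\alpha \|g\|_{L^2(\mathbb{S}^{d-1})}$. A ``leg'' of the recursion of length $\ell$, indexed by $(i_1, \ldots, i_{\ell-1}) \in \{0,\dots,\beta\}^{\ell-1}$, starts from $f_m$ and contributes a homogeneous $u$-piece of degree $n = m - 2k\ell + \sum_t i_t$ whose spherical norm is bounded by $(CM)^{\ell} \prod_{s=1}^{\ell} (d_s + D)^\alpha \|f_m\|_{L^2(\mathbb{S}^{d-1})}$, where $d_s$ is the degree produced at step $s$ and $M := \max_i \|P_i\|_{L^\infty(\mathbb{S}^{d-1})}$.

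\textbf{Assembly and order.} Let $q_n$ denote the homogeneous component of degree $n$ of $q$. Grouping legs by the gap $j := m - n$, the spherical norm of $q_n$ satisfies a bound of the form
\[
\|q_n\|_{L^2(\mathbb{S}^{d-1})} \lesssim \sum_{j \geq 2k} A^{j/(2k-\beta)} (n+j+D)^{j\alpha/(2k-\beta)} \|f_{n+j}\|_{L^2(\mathbb{S}^{d-1})}
\]
with an absolute constant $A$. For $f$ of order $\rho$, the standard correspondence between order and spherical coefficient decay gives $\|f_m\|^{1/m} \lesssim m^{-1/\rho}$, and the hypothesis $\rho < (2k-\beta)/\alpha$ is precisely what makes each successive term decay geometrically in $j$, so the sum is controlled by its first term $j = 2k$. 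This yields $\|q_n\|^{1/n} \lesssim n^{-1/\rho}$, i.e.\ $q$ has order $\leq \rho$; the parallel argument gives $r$ of order $\leq \rho$, and the identity $f = \sum_m (Pq^{(m)} + h^{(m)}) = Pq + r$ with $\Delta^{k} r = 0$ follows by summing over $m$.

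\textbf{Main obstacle.} The decisive step is the final assembly. A single leg can carry norm growth as large as $(m!)^{\alpha/(2k-\beta)}$, and a naive bound on $\|q^{(m)}\|$ by the worst leg would inflate the order of $q$ strictly above $\rho$ whenever $\alpha > 0$. The hypothesis $\rho < (2k-\beta)/\alpha$ is exactly the threshold at which, once all legs ending at a fixed final degree $n$ are aggregated, the contribution from the shortest leg ($\ell = 1$, i.e.\ $m = n + 2k$) dominates. Establishing this first-term dominance sharply, against both the combinatorial proliferation of legs of large length and the factorial factor per leg, is the heart of the argument.
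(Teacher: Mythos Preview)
Your proposal is correct and follows essentially the same approach as the paper: the paper's proof reduces to Proposition~\ref{PropT} (which derives the step bound $\|T f_m\|\le C(m+D)^\alpha\|f_m\|$ from the coercivity via the orthogonality $\langle h_m,T f_m\rangle_{L^2(\mathbb{S}^{d-1})}=0$ for polyharmonic $h_m$, together with Cauchy--Schwarz) and Theorem~\ref{Thm6} (which carries out exactly the leg-recursion and fixed-final-degree assembly you describe, with the threshold $\rho<(2k-\beta)/\alpha$ appearing for the same reason). The only cosmetic difference is that you deduce the Fischer-pair property from $P_{2k}\ge 0$ and \cite{Rend08}, whereas the paper obtains it directly from the coercivity inside Proposition~\ref{PropT}; note, though, that your ``elementary step estimate'' silently uses the orthogonality just mentioned, which is the one ingredient worth making explicit.
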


\begin{proof}
By Proposition \ref{PropT} with $C_m := C\left( m+D\right)^{- \alpha }$, we
have that $\left( P_{2k},\Delta ^{k}\right) $ is a Fischer pair for $%
\mathcal{P}\left( \mathbb{\ R}^{d}\right) $ and furthermore, 
\begin{equation*}
\left\Vert Tf_{m}\right\Vert _{L^2(\mathbb{S}^{d-1})}\leq C\left( m+D\right)
^{\alpha }\left\Vert f_{m}\right\Vert _{L^2(\mathbb{S}^{d-1})}
\end{equation*}
for every homogeneous polynomial $f_m$ of degree $m$. Now the result follows
from Theorem \ref{Thm6}.
\end{proof}

First, let us see how Theorem \ref{ThmMain1} together with Theorem \ref%
{Thm6b} imply the following result of D. Armitage, cf. \cite[Theorem 1]%
{Armi04}, which is a refinement of a result of D. Khavinson and H. Shapiro,
cf. \cite[Theorem 1]{KhSh92}. Actually, the Theorem of Armitage is improved,
since there we have that if $0<\rho \left( h\right) =\rho \left( f\right) <
\infty$ then $\tau \left( h\right) \leq C_{1}\tau \left( f\right) $, where $%
C_{1}$ depends only on the domain $\Omega$, and in the case where $\Omega $
is a ball, $C_{1}=1.$ It follows from our results that $C_1$ can be taken to
be 1 for arbitrary ellipsoids, and not just the ball.

\begin{theorem}
Le $\Omega =\left\{ \left( x_{1},\dots ,x_{d}\right) \in \mathbb{R}^{d}:%
\frac{x_{1}^{2}}{a_{1}^{2}}+\cdots +\frac{x_{d}^{2}}{a_{d}^{2}}<1\right\} $
be an ellipsoid, where the constants $a_{1},\dots ,a_{d}$ are assumed to be
positive numbers. Then for every entire function $f$ on $\mathbb{C}^{d}$,
the solution $h$ of the Dirichlet problem for $\Omega $ with data function $%
f $ (restricted to the boundary) has a harmonic continuation to $\mathbb{R}%
^{d} $ and hence a continuation to an entire function on $\mathbb{C}^{d}$.
Furthermore, denoting also by $h$ the said extension, we have $\rho \left(
h\right) \leq \rho \left( f\right)$, and if $0<\rho \left( h\right) =\rho
\left( f\right) < \infty $, then $\tau \left( h\right) \leq \tau \left(
f\right) $.
\end{theorem}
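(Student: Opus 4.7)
The plan is to reduce the Dirichlet problem to a Fischer decomposition of the data $f$ with respect to the polynomial that defines $\partial\Omega$. Set $P_{2}(x):=x_{1}^{2}/a_{1}^{2}+\cdots +x_{d}^{2}/a_{d}^{2}$ and $P_{0}(x):=1$, so that $P:=P_{2}-P_{0}$ vanishes precisely on $\partial\Omega$. Since $P_{2}(x)\geq c|x|^{2}$ with $c=(\max_{j}a_{j})^{-2}$, we have $P_{2}(\theta)\geq c>0$ for every $\theta\in\mathbb{S}^{d-1}$, and hence $\langle P_{2}f_{m},f_{m}\rangle_{L^{2}(\mathbb{S}^{d-1})}\geq c\langle f_{m},f_{m}\rangle_{L^{2}(\mathbb{S}^{d-1})}$ for every homogeneous polynomial $f_{m}$ of any degree $m$. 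This is exactly the hypothesis of Theorem~\ref{ThmMain1} with $k=1$, $\beta=0$ and $\alpha=0$, so by the stated convention $(2k-\beta)/\alpha=\infty$ and the theorem applies to entire data of arbitrary finite order with no restriction.

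For $f$ of finite order $\rho(f)$, Theorem~\ref{ThmMain1} produces entire functions $q$ and $r$ of order at most $\rho(f)$ satisfying $f=Pq+r$ and $\Delta r=0$. Since $P|_{\partial\Omega}=0$, the harmonic entire function $r$ agrees with $f$ on $\partial\Omega$; by the uniqueness of the Dirichlet solution on the bounded domain $\Omega$ (via the maximum principle) one has $h=r$ on $\Omega$, so $r$ is the claimed entire harmonic extension and automatically satisfies $\rho(h)\leq\rho(f)$. The case $\rho(f)=\infty$ follows by an approximation argument at the level of the homogeneous expansion $f=\sum_{m}f_{m}$: Fischer's original theorem gives a polynomial decomposition $f_{m}=Pq_{m}+r_{m}$ with $\Delta r_{m}=0$, and the uniform-in-$m$ norm estimate underlying Proposition~\ref{PropT} allows one to sum the series $\sum_{m}r_{m}$ and $\sum_{m}q_{m}$ to entire functions on $\mathbb{C}^{d}$, producing the extension with the trivial order bound.

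The remaining refinement is the sharp type bound $\tau(h)\leq\tau(f)$ in the case $0<\rho(h)=\rho(f)<\infty$; this will come from the quantitative version Theorem~\ref{Thm6b}, whose hypothesis is verified with the \emph{uniform} constant $c$ displayed above rather than a degree-dependent sequence. The main obstacle I anticipate is precisely this quantitative bookkeeping: to obtain the constant $C_{1}=1$ (improving the statement in \cite{Armi04}), one must show that the linear map $f_{m}\mapsto r_{m}$ has operator norm controlled by a constant independent of $m$, so that the Taylor coefficients of $r$ are bounded term by term by those of $f$ and no extra factor growing with the degree appears. Once Theorem~\ref{Thm6b} supplies this uniform control, the classical relation between the growth of the maximum modulus and the Taylor coefficients of an entire function of finite order and type delivers $\tau(h)\leq\tau(f)$ directly.
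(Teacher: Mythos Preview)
Your proposal is correct and follows the same route as the paper: verify the integral hypothesis of Theorem~\ref{ThmMain1} with $\alpha=0$ using the ellipticity bound $P_{2}(\theta)\geq(\max_{j}a_{j})^{-2}$ on $\mathbb{S}^{d-1}$, apply that theorem (with $k=1$, $\beta=0$) to obtain the decomposition $f=(P_{2}-1)q+h$ with $\Delta h=0$ and $\rho(h)\leq\rho(f)$, and then invoke Theorem~\ref{Thm6b} (the $\alpha=0$ clause) for the type inequality.

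One clarification on the last step: Theorem~\ref{Thm6b} does not bound the map $f_{m}\mapsto r_{m}$ in operator norm as you describe; rather, it shows that $q$ has order $<\rho(f)$, or else order $\rho(f)$ and type at most $\tau(f)$. The bound $\tau(h)\leq\tau(f)$ then follows because $h=f-(P_{2}-1)q$ is a difference of two entire functions, each of order at most $\rho(f)$ and type at most $\tau(f)$ (multiplication by the polynomial $P_{2}-1$ does not change order or type). This is exactly the argument the paper gives, so your reference to Theorem~\ref{Thm6b} is right even though the mechanism you sketch is slightly off.
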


\begin{proof}
In order to apply Theorem \ref{ThmMain1} we take $k=1$, $P_{2}\left(
x\right) =\frac{x_{1}^{2}}{a_{1}^{2}}+\cdots +\frac{x_{d}^{2}}{a_{d}^{2}},$ $%
P_{0}=1$ and $\beta =0.$ Set $C = \max\{a_1^2, \dots, a_d^2\}$ and note that
the restriction of $P_2$ to $\mathbb{S}^{d-1}$ satisfies $P_2 \ge 1/ C$.
Thus, the integral inequality from Theorem \ref{ThmMain1} with $\alpha =0$
follows: 
\begin{equation*}
\left\langle P_{2}f_{m},f_{m}\right\rangle _{L^{2}(\mathbb{S}^{d-1})}\geq 
\frac{1}{C}\left\langle f_{m},f_{m}\right\rangle _{L^{2}(\mathbb{S}^{d-1})},
\end{equation*}
for all homogeneous polynomials $f_{m}$ of degree $m.$ Then $\left( 2k-\beta
\right) /\alpha =\infty $ and Theorem \ref{ThmMain1} says that for any
entire function $f$ of order $\rho <\infty $ there exist entire functions $q$
and $h$ of order $\leq \rho $ such that $f=\left( P_{2}-1\right) q+h$ and $%
\Delta h=0.$ Thus $h$ is an entire harmonic function with $f\left( \xi
\right) =h\left( \xi \right) $ for all $\xi \in \partial \Omega .$ By
Theorem \ref{Thm6b} the function $q$ has either order $<\rho ,$ or order $%
\rho $ and type $\leq \tau \left( f\right) .$ It follows that if $h$ has
order exactly $\rho$, its type must satisfy $\tau \left( h \right)\leq \tau
\left( f\right) $ if $h$ has order $\rho .$
\end{proof}

The results in the present paper can also be used to deal with the Dirichlet
problem for parabolas, including degenerate cases such as the strip, with
boundary given by $\left( x_{2}+a\right) (x_{2}-a)=0$. For an arbitrary
nondegenerate parabola, after a translation and a rotation we may assume
that it is symmetric with respect to the $x$-axis and has the origin as its
vertex, so it is defined by the equation $ax_{1}=x_{2}^{2}$.

In order to apply Theorem \ref{ThmMain1} to these examples the assumed
integral inequality needs to be proven. We do so for $d=2$ and $%
P_{2}(x_{1},x_{2})=x_{2}^{2}$ in the second main result. Higher dimensional
generalizations will be pursued elsewhere.

\begin{theorem}
\label{ThmMainest} Let $d=2$. Then for all homogeneous polynomials $f_{m}$
of degree $m$ the following inequality holds: 
\begin{equation}
\left\langle x_{2}^{2}f_{m},f_{m}\right\rangle _{L^{2}(\mathbb{S}^{1})}\geq 
\frac{\pi ^{2}}{4\left( m+4\right) ^{2}}\left\langle
f_{m},f_{m}\right\rangle _{L^{2}(\mathbb{S}^{1})}.  \label{eqmain2}
\end{equation}
\end{theorem}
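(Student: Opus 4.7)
The plan is to pass to Fourier series on $\mathbb{S}^{1}$. Parameterizing by $\theta\mapsto(\cos\theta,\sin\theta)$, so that $x_{2}=\sin\theta$, I would expand the restriction of a homogeneous polynomial $f_{m}$ of degree $m$ as $f_{m}(\cos\theta,\sin\theta)=\sum_{k}c_{k}e^{ik\theta}$. The homogeneity forces $c_{k}=0$ unless $|k|\leq m$ and $k\equiv m\pmod{2}$, giving a support of exactly $m+1$ indices --- this finite support is what will make the desired coercive estimate possible.

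Using $\sin^{2}\theta=\tfrac{1}{2}-\tfrac{1}{4}(e^{2i\theta}+e^{-2i\theta})$ together with Parseval, a short computation yields
\[
\langle x_{2}^{2}f_{m},f_{m}\rangle_{L^{2}(\mathbb{S}^{1})} \;=\; \pi\sum_{k}|c_{k}|^{2}-\pi\,\mathrm{Re}\sum_{k}c_{k+2}\overline{c_{k}},
\]
while $\|f_{m}\|_{L^{2}(\mathbb{S}^{1})}^{2}=2\pi\sum_{k}|c_{k}|^{2}$. Reindexing $d_{j}:=c_{-m+2j}$ for $j=0,\dots,m$ converts the cross term into $\mathrm{Re}\,\langle Ad,d\rangle_{\mathbb{C}^{m+1}}$, where $A$ is the $(m+1)\times(m+1)$ shift matrix with $1$'s on the superdiagonal. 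The symmetric combination $A+A^{*}$ is the standard tridiagonal matrix with zero diagonal and ones on both off-diagonals, whose eigenvalues are classically $2\cos(j\pi/(m+2))$ for $1\leq j\leq m+1$. Using the largest of these gives $\mathrm{Re}\,\langle Ad,d\rangle\leq\cos(\pi/(m+2))\,\|d\|^{2}$.

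Substituting back and invoking $1-\cos x=2\sin^{2}(x/2)$ immediately yields
\[
\langle x_{2}^{2}f_{m},f_{m}\rangle_{L^{2}(\mathbb{S}^{1})}\;\geq\;\sin^{2}\!\Bigl(\frac{\pi}{2(m+2)}\Bigr)\|f_{m}\|_{L^{2}(\mathbb{S}^{1})}^{2},
\]
which is in fact \emph{sharper} than the stated bound. The only remaining task is the elementary comparison $\sin(\pi/(2(m+2)))\geq\pi/(2(m+4))$. I would handle this via the Taylor estimate $\sin x\geq x-x^{3}/6$; after a short algebraic manipulation, this reduces to $24(m+2)^{2}\geq\pi^{2}(m+4)$, which is obvious for all $m\geq 0$ since $24>\pi^{2}$ and $(m+2)^{2}\geq m+4$.

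The main potential difficulty is the eigenvalue identification for $A+A^{*}$, but this is a textbook fact about the path-graph adjacency matrix; and the concluding numerical comparison reduces to a one-line polynomial inequality. I therefore expect the argument to proceed without serious obstruction.
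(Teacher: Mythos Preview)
Your argument is correct and reaches the same sharp constant $\sin^{2}\!\bigl(\pi/(2(m+2))\bigr)$ as the paper, but by a cleaner route. The paper first reduces to even degree (Lemma~\ref{Lemevenodd}), then to sums of even harmonics (Lemma~\ref{LemReduceHarmonic}), works in the real basis $\{\cos 2k t,\sin 2k t\}$, and ends up with a tridiagonal matrix having a $\sqrt{2}$ in the corner, whose spectrum is identified via Chebyshev polynomials (Proposition~\ref{PropCheb}); the final numerical step uses a Bernoulli-number estimate for $\sin(\pi/n)$ (Lemma~\ref{Lemsin}). By passing directly to the exponential basis $\{e^{ik\theta}: |k|\le m,\ k\equiv m\ (\mathrm{mod}\ 2)\}$ you land immediately on the standard path-graph adjacency matrix, whose eigenvalues $2\cos(j\pi/(m+2))$ are textbook, so the even/odd reduction, the Gauss decomposition, and the Chebyshev identification are all bypassed. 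The two computations must of course yield the same minimal eigenvalue since they diagonalize the same operator on the same space, but your packaging is noticeably shorter.

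One small slip: after applying $\sin x\ge x-x^{3}/6$ and clearing denominators, the inequality that actually needs to hold is $48(m+2)^{2}\ge\pi^{2}(m+4)$, not $24(m+2)^{2}\ge\pi^{2}(m+4)$. This is harmless, since your stronger claim $24(m+2)^{2}\ge\pi^{2}(m+4)$ is also true for all $m\ge 0$ (indeed $24>\pi^{2}$ and $(m+2)^{2}\ge m+4$), and in any case implies the required bound.
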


\begin{theorem}
\label{Thmparabolas}Let $d=2$, let $f(x_{1},x_{2})$ be an entire function of
order $\rho \left( f\right) <\frac{1}{2}$, and let $ax_{1}=x_{2}^{2}$ define
the locus of a parabola. Then there exists an entire harmonic function $h$
of order $\leq \rho $ such that $h=f$ on $\Omega :=\left\{ \left(
x_{1},x_{2}\right) \in \mathbb{R}^{2}:ax_{1}=x_{2}^{2}\right\} .$
\end{theorem}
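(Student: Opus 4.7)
The plan is to apply Theorem~\ref{ThmMain1} directly, with the polynomial whose zero set is the given parabola. Specifically, I would take $d=2$, $k=1$ (so $2k=2$), $\beta = 1$, and set
\[
P_{2}(x_1,x_2) = x_2^2, \qquad P_{1}(x_1,x_2) = a x_1, \qquad P_{0} = 0,
\]
so that $P_{2} - P_{1} - P_{0} = x_2^2 - a x_1$ is exactly the defining polynomial of the parabola $\Omega$.

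Next I would verify the integral inequality hypothesis of Theorem~\ref{ThmMain1}. This is precisely the content of Theorem~\ref{ThmMainest}: for all homogeneous polynomials $f_m$ of degree $m$,
\[
\langle x_2^2 f_m, f_m\rangle_{L^2(\mathbb{S}^{1})} \geq \frac{\pi^2}{4(m+4)^2}\,\langle f_m, f_m\rangle_{L^2(\mathbb{S}^{1})},
\]
which is the required estimate with $C = 4/\pi^2$, $D=4$ and, crucially, $\alpha = 2$. Since $\beta = 1$ and $2k = 2$, the critical order threshold is $(2k-\beta)/\alpha = 1/2$, which matches exactly the hypothesis $\rho(f) < 1/2$ in the statement.

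Then I would apply Theorem~\ref{ThmMain1} to the given entire function $f$ of order $\rho(f) < 1/2$: it yields entire functions $q$ and $r$ of order at most $\rho(f)$ such that
\[
f = (x_2^2 - a x_1)\,q + r \quad \text{and} \quad \Delta r = 0.
\]
Finally, restricting to $\Omega = \{(x_1,x_2) \in \mathbb{R}^2 : a x_1 = x_2^2\}$, the first summand vanishes identically, so $f = r$ on $\Omega$. Setting $h := r$ produces the required entire harmonic function of order $\leq \rho(f)$ agreeing with $f$ on the parabola.

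There is essentially no obstacle here beyond matching notation: Theorem~\ref{ThmMainest} provides exactly the spherical inequality needed, and the numerical values $\alpha = 2$, $\beta = 1$, $k = 1$ were arranged so that the order bound $(2k-\beta)/\alpha$ equals the threshold $1/2$ appearing in the statement. The only delicate point is bookkeeping the signs so that $P_2 - P_1 - P_0$ reproduces $x_2^2 - a x_1$; with $P_1(x_1,x_2) = a x_1$ this is immediate.
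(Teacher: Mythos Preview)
Your proposal is correct and follows essentially the same approach as the paper's own proof: the paper also sets $P_2(x_1,x_2)=x_2^2$, $P_1(x_1,x_2)=ax_1$, $P_0=0$, invokes Theorem~\ref{ThmMainest} to obtain $\alpha=2$, computes $(2k-\beta)/\alpha=1/2$, and then applies Theorem~\ref{ThmMain1} to get the decomposition $f=(x_2^2-ax_1)q+h$ with $\Delta h=0$, from which $f=h$ on $\Omega$.
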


\begin{proof}
Set $P_{2}(x_{1},x_{2})=x_{2}^{2}$ (so $k=1$), $P_{1}(x_{1},x_{2})=ax_{1}$,
and $P_{0}(x_{1},x_{2})=0$. Since $\alpha = 2$ in Theorem \ref{ThmMainest},
it follows that $\left( 2k-\beta \right) /\alpha =\frac{1}{2}.$ By Theorems %
\ref{ThmMainest} and \ref{ThmMain1} there exist entire functions $q$ and $h$
of order at most $\rho < 1/2$, such that $h$ is harmonic and $%
f(x_{1},x_{2})=\left( x_{2}^{2}-ax_{1}\right) q(x_{1},x_{2})+h(x_{1},x_{2}).$
Thus, $f=h$ on $\Omega $.
\end{proof}

\begin{theorem}
Let $d=2$, let $f(x_{1},x_{2})$ be an entire function of order $\rho \left(
f\right) <1$, and consider the strip with locus defined by $\left(
x_{1}-a\right) \left( x_{1}+a\right) =0$, where $a > 0$. Then there exists
an entire harmonic function $h$ of order $\leq \rho $ such that $h=f$ on $%
\Omega :=\left\{ \left( x_{1},x_{2}\right) \in \mathbb{R}^{2}:x_{1}^{2} =
a^{2}\right\} .$
\end{theorem}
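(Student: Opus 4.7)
The plan is to reduce the statement to an application of Theorem \ref{ThmMain1} exactly as was done in the parabola case (Theorem \ref{Thmparabolas}), with the roles of the coordinates swapped and with the polynomial $P_0$ taken equal to the constant $a^2$. Concretely, set $k=1$, $P_2(x_1,x_2) = x_1^2$, $\beta = 0$, and $P_0(x_1,x_2) = a^2$; the locus of the strip is then $P_2 - P_0 = 0$.

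The first step is to record the spherical inequality required by Theorem \ref{ThmMain1}. Namely, I would show that
\begin{equation*}
\left\langle x_1^2 f_m, f_m\right\rangle_{L^2(\mathbb{S}^1)} \geq \frac{\pi^2}{4(m+4)^2}\left\langle f_m, f_m\right\rangle_{L^2(\mathbb{S}^1)}
\end{equation*}
for every homogeneous polynomial $f_m$ of degree $m$. This follows immediately from Theorem \ref{ThmMainest} by the symmetry $(x_1,x_2)\mapsto(x_2,x_1)$: this reflection preserves $\mathbb{S}^1$ and the surface measure $d\theta$, and sends homogeneous polynomials of degree $m$ to homogeneous polynomials of the same degree while preserving the $L^2(\mathbb{S}^1)$ inner product. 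Applying Theorem \ref{ThmMainest} to $\widetilde{f}_m(x_1,x_2):=f_m(x_2,x_1)$ and undoing the change of variables yields the desired estimate, with the same constants $C=4/\pi^2$, $D=4$ and exponent $\alpha=2$.

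With this estimate in hand, Theorem \ref{ThmMain1} applies with parameters $2k-\beta = 2$ and $\alpha = 2$, giving the threshold $(2k-\beta)/\alpha = 1$. Since $\rho(f)<1$, there exist entire functions $q$ and $h$, each of order at most $\rho$, such that
\begin{equation*}
f(x_1,x_2) = \bigl(x_1^2 - a^2\bigr)\,q(x_1,x_2) + h(x_1,x_2) \quad\text{and}\quad \Delta h = 0.
\end{equation*}
Restricting to $\Omega = \{(x_1,x_2)\in\mathbb{R}^2 : x_1^2 = a^2\}$ makes the factor $x_1^2 - a^2$ vanish, so $h=f$ on $\Omega$, as required.

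No step here is substantially difficult: the parabola case is the genuinely novel one, because it required the nontrivial sharp estimate of Theorem \ref{ThmMainest}. The strip case is essentially a corollary, and the only thing to be careful about is the orientation of the variables, which is handled by the trivial reflection symmetry of $\mathbb{S}^1$. (As a sanity check, note that the threshold order $1$ for the strip is strictly larger than the threshold $1/2$ for the parabola, reflecting the fact that constant $P_0$ contributes to $\beta$ only trivially, whereas the linear $P_1(x) = ax_1$ appearing in the parabola case forced $\beta = 1$ and thus a smaller admissible order.)
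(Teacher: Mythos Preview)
Your proof is correct and follows exactly the same route as the paper's: set $k=1$, $P_2=x_1^2$, $P_0=a^2$, $\beta=0$, invoke the spherical estimate (Theorem \ref{ThmMainest}) together with Theorem \ref{ThmMain1} to obtain the decomposition $f=(x_1^2-a^2)q+h$ with $\Delta h=0$, and restrict to $\Omega$. If anything, you are slightly more careful than the paper, which silently applies Theorem \ref{ThmMainest} (stated for $x_2^2$) to $P_2=x_1^2$ without mentioning the coordinate swap; your explicit symmetry argument fills that small gap.
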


\begin{proof}
Set $P_{2}(x_{1},x_{2})=x_{1}^{2}$ (so $k=1$), $P_{1}(x_{1},x_{2})=0$, and $%
P_{0}(x_{1},x_{2})=a^{2}$. Then $\left( 2k-\beta \right) /\alpha =1.$ By
Theorems \ref{ThmMainest} and \ref{ThmMain1} there exist entire functions $q$
and $h$ of order at most $\rho $, such that $h$ is harmonic and $%
f(x_{1},x_{2})=\left(x_{1}^{2}-a^{2}\right) q(x_{1},x_{2})+h(x_{1},x_{2}).$
Thus, $f=h$ on $\Omega $.
\end{proof}

The Dirichlet problem on the strip in $\mathbb{R}^{2}$ was discussed in the
classical paper \cite{Widd60}, see also \cite{Dura03}. For the Dirichlet
problem on the slab $S_{a,b}:=\left( a,b\right) \times \mathbb{R}^{d-1}$
(which is a strip for $d=2)$ we mention the following result in \cite{KLR17b}%
: each entire function $f$ has a decomposition 
\begin{equation*}
f\left( x_{1},\dots ,x_{d}\right) =\left( x_{1}-a\right) \left(
x_{1}-b\right) q\left( x_{1},\dots ,x_{d}\right) +h\left( x_{1},\dots
,x_{d}\right)
\end{equation*}%
where $q$ is an entire function and $h$ is entire and harmonic. Thus the
existence of a Fischer decomposition is proved, but uniqueness of the
representation is lost. For example, the function $f\left(
x_{1},x_{2}\right) =\sin \left( \frac{\pi }{a}x_{1}\right) e^{\frac{\pi }{ a 
}x_{2}}$ is harmonic on $\mathbb{R}^{2}$ and it vanishes on the boundary of
the strip. Hence, the function $f$ has at least two decompositions, the
first where $q=0$ and $h=f,$ the second where $h=0$ and 
\begin{equation*}
q=\frac{\sin \left( \frac{\pi }{a}x_{1}\right) e^{\frac{\pi }{a }x_{2}}}{%
\left( x_{1}-a\right) \left( x_{1}+a\right) },
\end{equation*}%
which is an entire function.

Another interesting example is the ellipsoidal cylinder 
\begin{equation*}
\Omega _{\text{cyl}}=\left\{ x=\left( x_{1},\dots ,x_{d}\right) \in \mathbb{R%
}^{d}:\frac{x_{1}^{2}}{a_{1}^{2}}+\cdots +\frac{x_{d-1}^{2}}{a_{d-1}^{2}}%
<1\right\}
\end{equation*}%
for given positive numbers $a_{1}, \dots ,a_{d-1}$ and $d\geq 2.$ One can
use the results in this paper to conclude that for each entire function $f$
of order $\rho <1$, there exist entire functions $q$ and $h$ of order
bounded by $\rho$, such that 
\begin{equation*}
f=\left( \frac{x_{1}^{2}}{a_{1}^{2}}+\cdots +\frac{x_{d-1}^{2}}{a_{d-1}^{2}}%
-1\right) q+h\text{ and }\Delta h=0.
\end{equation*}%
Thus $h$ is an entire harmonic function of order $\leq \rho $ solving the
Dirchlet problem for the function $f$ and the cylinder -- a result which was
proven by a different method in \cite{KLR17}. It is an open question whether
for \emph{any} entire data function $f$ (restricted to the boundary of a
cylinder) there exists a harmonic entire function $h$ that solves the
Dirichlet problem for $f$ and the cylinder. Results in \cite{GaRe16} and 
\cite{GaRe19} for extending harmonic functions vanishing on the boundary of
the cylinder indicate that a positive answer is possible.

Finally let us mention that the Dirichlet problem for the halfspace is
discussed in \cite{Gard81}, see also \cite{Mady21}. For the Dirichlet
problem for general unbounded domains we refer to \cite{Gard93}.

\section{Entire functions}

A point in $\mathbb{C}^{d}$ is denoted by $z=\left( z_{1},\dots
,z_{d}\right) $, and by $\left\vert z\right\vert =\sqrt{ \left\vert
z_{1}\right\vert ^{2}+ \cdots +\left\vert z_{d}\right\vert ^{2}}$ its
Euclidean norm. For a continuous function $f:\mathbb{C}^{d}\rightarrow 
\mathbb{C}$ we define 
\begin{equation*}
M_{\mathbb{C}^{d}}\left( f,r\right) :=\sup \left\{ \left\vert f\left(
z\right) \right\vert :z\in \mathbb{C}^{d},\left\vert z\right\vert =r\right\},
\label{eqM1}
\end{equation*}%
and then the order $\rho _{\mathbb{C}^{d}}\left( f\right) $ of $f$ is 
\begin{equation*}
\rho _{\mathbb{C}^{d}}\left( f\right) =\lim_{r\rightarrow \infty }\sup \frac{%
\log \log M_{\mathbb{C}^{d}}\left( f,r\right) }{\log r}\in \left[ 0,\infty %
\right] .
\end{equation*}%
If $0<\rho _{\mathbb{C}^{d}}\left( f\right) <\infty $ then the type of $f$
is given by 
\begin{equation*}
\tau _{\mathbb{C}^{d}}\left( f\right) =\lim_{r\rightarrow \infty }\sup \frac{%
\log M_{\mathbb{C}^{d}}\left( f,r\right) }{r^{\rho \left( f\right) }}.
\end{equation*}%
There is a vast literature about entire analytic functions $f:\mathbb{C}%
^{d}\rightarrow \mathbb{C}$ of finite order. In this paper we will consider
also the order of a harmonic function $f:\mathbb{R}^{d}\rightarrow \mathbb{R}
$, which is defined in terms of real variables, see \cite{Frya78}, \cite%
{FrSh87}, \cite{Fuga80} or \cite{Armi04} for details, or the exposition
below.

For our purposes it is necessary to introduce now some terminology and
notations: let $B_{R}:=\left\{ x\in \mathbb{R}^{d}:\left\vert x\right\vert
<R\right\} $ be the open ball in $\mathbb{R}^{d}$ with center $0$ and radius 
$0<R\leq \infty .$ Assume that $f$ is an infinitely differentiable function
on $B_{R}.$ Define a homogeneous polynomial of degree $m$ by 
\begin{equation}  \label{taylor}
f_{m}\left( x\right) =\sum_{\left\vert \alpha \right\vert =m}\frac{1}{\alpha
!}\frac{\partial ^{\alpha }f}{\partial x^{\alpha }}\left( 0\right)
\;x^{\alpha }\text{ for }m\in \mathbb{N}_{0}.
\end{equation}
Let $A\left( B_{R}\right) $ be the set of all infinitely differentiable
functions $f:B_{R}\rightarrow \mathbb{C}$ such that for every compact subset 
$K\subset B_{R}$, the homogeneous Taylor series $\sum_{m=0}^{\infty
}f_{m}\left( x\right) $ converges absolutely and uniformly to $f$ on $K.$

For $x\in \mathbb{R}^{d}$ define $r:=\left\vert x\right\vert $ and $\theta
=x/\left\vert x\right\vert ,$ so $\theta \in \mathbb{S}^{d-1}$ and $%
x=r\theta .$ Then 
\begin{equation}  \label{extfun}
f\left( x\right) =\sum_{m=0}^{\infty }f_{m}\left( x\right)
=\sum_{m=0}^{\infty }f_{m}\left( r\theta \right) =\sum_{m=0}^{\infty
}r^{m}f_{m}\left( \theta \right) =f\left( r\theta \right),
\end{equation}%
which can be seen as a power series in the real variable $r$ and the
coefficients $f_{m}\left( \theta \right) $, where $\theta \in \mathbb{S}%
^{d-1}$. Clearly we can replace $r$ by a complex variable $\zeta $,
defining, for $z=\zeta \theta :=\left( \zeta \theta _{1},\dots ,\zeta \theta
_{d}\right) $ and $\theta =\left( \theta _{1},\dots ,\theta _{d}\right) \in 
\mathbb{S}^{d-1}$, the function 
\begin{equation*}
f\left( \zeta \theta \right) =\sum_{m=0}^{\infty }\zeta ^{m}f_{m}\left(
\theta \right),
\end{equation*}%
which is entire in $\zeta \in \mathbb{C}$ for any fixed $\theta \in \mathbb{S%
}^{d-1}$. One can also replace $x=\left( x_{1},\dots ,x_{d}\right) $ in
formula (\ref{extfun}) by the complex vector $z=\left( z_{1},\dots
.,z_{d}\right) \in \mathbb{C}^{d},$ and the convergence of the sum $f\left(
z\right) $ follows from a result due to J. Siciak in \cite{Sici74}, stating
that for each homogeneous polynomial $f_{m}\left( z\right) $, the following
estimate holds:%
\begin{equation*}
\sup_{z\in \mathbb{C}^{d},\left\vert z\right\vert \leq 1}\left\vert
f_{m}\left( z\right) \right\vert \leq \sqrt{2}\sup_{\zeta \in \mathbb{C}%
,\left\vert \zeta \right\vert \leq 1}\sup_{\theta \in \mathbb{S}%
^{d-1}}\left\vert f_{m}\left( \zeta \theta \right) \right\vert =\sqrt{2}%
\sup_{\theta \in \mathbb{S}^{d-1}}\left\vert f_{m}\left( \theta \right)
\right\vert .
\end{equation*}%
Thus each function $f\in A\left( B_{R}\right) $ with $R=\infty $ has an
extension to an entire function $f:\mathbb{C}^{d}\rightarrow \mathbb{C}.$ In
general it is known that $A\left( B_{R}\right) $ is isomorphic to the set of
all holomorphic functions on the harmonicity hull of $B_{R},$ see e.g. \cite%
{Rend08} for more details.

In analogy to (\ref{eqM1}), one defines 
\begin{equation*}
M_{\mathbb{R}^{d}}\left( f,r\right) :=\sup \left\{ \left\vert f\left( \zeta
\theta \right) \right\vert :\zeta \in \mathbb{C},\left\vert \zeta
\right\vert =r,\theta \in \mathbb{S}^{d-1}\right\},
\end{equation*}%
and the corresponding order $\rho _{\mathbb{R}^{d}}\left( f\right) $ of a
non-constant function $f\in A\left( B_{\infty }\right) $: 
\begin{equation*}
\rho _{\mathbb{R}^{d}}\left( f\right) =\lim_{r\rightarrow \infty }\sup \frac{%
\log \log M_{\mathbb{R}^{d}}\left( f,r\right) }{\log r}.
\end{equation*}%
For an entire function $f$ one has the estimate 
\begin{equation*}
M_{\mathbb{R}^{d}}\left( f,r\right) \leq M_{\mathbb{C}^{d}}\left( f,r\right)
\leq M_{\mathbb{R}^{d}}\left( f,\sqrt{2}r\right),
\end{equation*}%
which leads to the statements 
\begin{equation*}
\rho _{\mathbb{C}^{d}}\left( f\right) =\rho _{\mathbb{R}^{d}}\left( f\right) 
\text{ and }\tau _{\mathbb{C}^{d}}\left( f\right) =\sqrt{2} \ \tau _{\mathbb{%
R}^{d}}\left( f\right),
\end{equation*}%
where $\tau _{\mathbb{R}^{d}}\left( f\right) $ is the type with respect to $%
M_{\mathbb{R}^{d}}\left( f,r\right),$ defined as 
\begin{equation*}
\tau _{\mathbb{R}^{d}}\left( f\right) =\lim_{r\rightarrow \infty }\sup \frac{
\log M_{\mathbb{R}^{d}}\left( f,r\right) }{ r^\rho},
\end{equation*}
provided $0 < \rho := \rho _{\mathbb{R}^{d}} < \infty $. In this paper we
shall work with $\rho _{\mathbb{R}^{d}}\left( f\right) $ and $\tau _{\mathbb{%
R}^{d}}\left( f\right) $, which are more natural in the setting of real
euclidean spaces.

It is not difficult to prove that for $f\in A\left( B_{\infty }\right) $ one
has 
\begin{equation*}
\rho _{\mathbb{R}^{d}}\left( f\right) =\lim_{m\rightarrow \infty }\sup \frac{%
\log m}{\log \left( \sqrt[m]{\frac{1}{\max_{\theta \in \mathbb{S}%
^{d-1}}\left\vert f_{m}\left( \theta \right) \right\vert }}\right) }.
\end{equation*}%
Frequently we shall use the following formulation: let $f\in A\left(
B_{\infty }\right) $ and let $\rho \ge 0$. Then $\rho _{\mathbb{R}%
^{d}}\left( f\right) \leq \rho < \infty$ if and only if for every $%
\varepsilon >0$ there exists an $m_0 \geq 0$ such that for every $m \ge m_0$
we have 
\begin{equation}
\max_{\theta \in \mathbb{S}^{d-1}}\left\vert f_{m}\left( \theta \right)
\right\vert \leq \frac{1}{m^{m/\left( \rho +\varepsilon \right) }}.
\label{eqtaylor}
\end{equation}%
Let $f$ be an entire function of order $0<\rho <\infty $ and type $\tau$.
According to a theorem of Lindel\"{o}f and Pringsheim, 
\begin{equation*}
\lim_{m\rightarrow \infty }\sup \left( m\cdot \max_{\theta \in \mathbb{S}%
^{d-1}}\left\vert f_{m}\left( \theta \right) \right\vert ^{\frac{\rho }{m}%
}\right) =e\rho \tau .
\end{equation*}%
Thus for every $\varepsilon >0$ there exists $m_{0}\in \mathbb{N}$ such that
for all $m\geq m_{0}$%
\begin{equation}  \label{LindPring}
\max_{\theta \in \mathbb{S}^{d-1}}\left\vert f_{m}\left( \theta \right)
\right\vert \leq \frac{\left( e\rho \tau +\varepsilon \right) ^{m/\rho }}{%
m^{m/\rho }}.
\end{equation}

On the other hand, if we know that $f$ is entire and there exists an $m_0$
such that for all $m \ge m_0$ inequality (\ref{LindPring}) is satisfied (for
some constants $\rho$ and $\tau$ and every $\varepsilon >0$) standard
arguments show that the order of $f$ is at most $\rho$, and if it is equal
to $\rho$, then its type is at most $\tau$. An analogous remark can be made
regarding inequality (\ref{eqtaylor}).

It can be proved that the sum (\ref{extfun}) also converges if we replace $%
x\in \mathbb{R}^{d}$ by a complex vector $z\in \mathbb{C}^{d}$ in the
polynomials $f_{m}\left( x\right) .$

The next result can be found in \cite[Theorem 11]{Rend08} (save for a
normalization error: $\sqrt{\omega _{d-1}}$ should appear in the
denominator, as it does below).

\begin{theorem}
\label{thm:norm} For all homogeneous polynomials $f_{m}$ of degree $m\in 
\mathbb{N}_{0}$ we have 
\begin{equation*}
\max_{\theta \in \mathbb{S}^{d-1}}\left\vert f_{m}\left( \theta \right)
\right\vert \leq \frac{\sqrt{2}}{\sqrt{\omega _{d-1}}}\left( 1+m\right)
^{\left( d-1\right) /2}\left\Vert f_{m}\right\Vert _{L^2(\mathbb{S}^{d-1})}.
\end{equation*}
\end{theorem}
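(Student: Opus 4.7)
The plan is to reduce the pointwise sup-bound to an $L^2$-estimate via the reproducing kernel of a finite-dimensional subspace of $L^2(\mathbb{S}^{d-1})$, exploiting rotational invariance to make the kernel constant on the diagonal. Since $f_m$ is the restriction of a homogeneous polynomial of degree $m$, it naturally lies in the finite-dimensional space $V_m := \mathcal{P}_m(\mathbb{R}^d)|_{\mathbb{S}^{d-1}} \subset L^2(\mathbb{S}^{d-1})$.

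First I would observe that the restriction map $\mathcal{P}_m(\mathbb{R}^d) \to V_m$ is injective (a homogeneous polynomial of degree $m$ is recovered from its values on $\mathbb{S}^{d-1}$ via $p(x) = |x|^m p(x/|x|)$), hence $\dim V_m = \dim \mathcal{P}_m(\mathbb{R}^d) = \binom{m+d-1}{d-1}$. Equivalently, one may use the Gauss decomposition $f_m = \sum_{j=0}^{\lfloor m/2\rfloor} |x|^{2j} H_{m-2j}$ with $H_{m-2j}$ a genuine (harmonic) spherical harmonic, so that $V_m$ is an orthogonal direct sum of the spaces $\mathcal{H}_{m-2j}$ of degree $m-2j$ spherical harmonics.

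Second, let $K_m(\theta,\eta)$ be the reproducing kernel of $V_m$ inside $L^2(\mathbb{S}^{d-1})$. Since $\mathcal{P}_m$ is invariant under the precomposition action of $O(d)$ and this action is unitary on $L^2(\mathbb{S}^{d-1})$, the kernel is rotation invariant: $K_m(R\theta,R\eta)=K_m(\theta,\eta)$ for every $R\in O(d)$. In particular $\theta\mapsto K_m(\theta,\theta)$ is a constant function on $\mathbb{S}^{d-1}$, and integrating the standard identity $\int_{\mathbb{S}^{d-1}} K_m(\theta,\theta)\,d\theta = \dim V_m$ yields
\begin{equation*}
K_m(\theta,\theta) = \frac{1}{\omega_{d-1}}\binom{m+d-1}{d-1}.
\end{equation*}

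Third, the reproducing property $f_m(\theta)=\langle f_m,K_m(\cdot,\theta)\rangle_{L^2(\mathbb{S}^{d-1})}$, combined with Cauchy--Schwarz and $\|K_m(\cdot,\theta)\|_{L^2}^2 = K_m(\theta,\theta)$, gives the pointwise bound
\begin{equation*}
|f_m(\theta)|^2 \leq K_m(\theta,\theta)\,\|f_m\|_{L^2(\mathbb{S}^{d-1})}^2 = \frac{1}{\omega_{d-1}}\binom{m+d-1}{d-1}\,\|f_m\|_{L^2(\mathbb{S}^{d-1})}^2.
\end{equation*}
Finally I would estimate $\binom{m+d-1}{d-1} = \frac{1}{(d-1)!}\prod_{k=1}^{d-1}(m+k)$ using the trivial inequalities $m+k \leq k(m+1)$ to obtain $\binom{m+d-1}{d-1}\leq (1+m)^{d-1}\leq 2(1+m)^{d-1}$, and take square roots to reach the stated inequality (the factor $\sqrt 2$ absorbs any slack; in fact $1$ suffices).

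The main technical point, and the only nontrivial step, is the computation of the diagonal of the reproducing kernel; everything else is either linear algebra or a crude binomial estimate. Equivalently, one can bypass the abstract reproducing-kernel language by treating each summand $H_{m-2j}$ in the Gauss decomposition with its zonal-harmonic reproducing kernel (giving $|H_{m-2j}(\theta)|^2\leq \dim\mathcal{H}_{m-2j}/\omega_{d-1}\cdot \|H_{m-2j}\|_{L^2}^2$), applying Cauchy--Schwarz in the index $j$, and using Pythagoras together with $\sum_{j=0}^{\lfloor m/2\rfloor}\dim\mathcal{H}_{m-2j}=\dim\mathcal{P}_m(\mathbb{R}^d)$; this is the variant most likely used in \cite{Rend08}.
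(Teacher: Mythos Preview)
Your argument is correct and self-contained. Note, however, that the paper does not actually supply a proof of this theorem: it simply quotes the result from \cite[Theorem~11]{Rend08} (correcting a normalization). So there is no ``paper's own proof'' to compare against here.

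Your reproducing-kernel route is clean and in fact yields a slightly sharper constant than stated: the bound $\binom{m+d-1}{d-1}\le (1+m)^{d-1}$ already gives the inequality with the factor $\sqrt{2}$ replaced by $1$, as you observe. The alternative you sketch at the end (Gauss decomposition, zonal reproducing kernels on each $\mathcal{H}_{m-2j}$, Cauchy--Schwarz in $j$, and Pythagoras) is indeed closer in spirit to the argument in \cite{Rend08}; the extra $\sqrt{2}$ in the stated constant presumably originates from an application of Cauchy--Schwarz across the two parities (or across the $\lfloor m/2\rfloor+1$ harmonic pieces) in that version, which your direct reproducing-kernel computation on all of $V_m$ avoids.
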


\section{Estimates for the Fischer decomposition}

Assume that $\left( P, Q \right) $ is a Fischer pair for the vector space $%
\mathcal{P}\left( \mathbb{R}^{d}\right) $. By definition, for each
polynomial $f $ there exist unique polynomials $q$ and $r$ such that 
\begin{equation}
f=Pq+r\text{ with } Q(D) r=0.  \label{eqdede}
\end{equation}%
Since the decomposition in (\ref{eqdede}) is unique we can define operators $%
T_{P}:\mathcal{P}\left( \mathbb{R}^{d}\right) \rightarrow \mathcal{P}\left( 
\mathbb{R}^{d}\right) $ and $R_{P}:\mathcal{P}\left( \mathbb{R}^{d}\right)
\rightarrow \mathcal{P}\left( \mathbb{R}^{d}\right) $ by setting $%
T_{P}\left( f\right) :=q$ and $R_{P}\left( f\right) :=r.$ So we write (as in 
\cite{Rend08} or \cite{KhSh92}) 
\begin{equation}
f=P\cdot T_{P}\left( f\right) +R_{P}\left( f\right) .  \label{defTT}
\end{equation}%
It is easy to see that $T_{P}$ and $R_{P}$ are \emph{linear} operators. Now
assume that $P$ is a polynomial of degree $2k$, and write 
\begin{equation}
P=P_{2k}-P_{2k-1}- \cdots -P_{0},\   \label{eqHom}
\end{equation}%
where the $P_{j}$ are homogeneous polynomials for $j=0,\dots ,2k$. The minus
signs are chosen to have a simplified expression in the next Theorem, which
describes the operator $T_{P}$ using just the multiplication operators $%
P_{j} $ and the operator $T:=T_{P_{2k}}$ for the leading term $P_{2k}.$

\begin{theorem}
Let $Q$ be a homogeneous polynomial of degree $2k > 0$, let $P$ be a
polynomial of degree $2k$ of the form (\ref{eqHom}), and assume that $%
(P_{2k},Q)$ is a Fischer pair for $\mathcal{P}\left( \mathbb{R}^{d}\right) $%
. Setting $T:=T_{P_{2k}}$, we have 
\begin{equation}
T_{P}\left( f_{m}\right)
=\sum_{j=-1}^{m}\sum_{s_{0}=0}^{2k-1}\sum_{s_{1}=0}^{2k-1} \dots
\sum_{s_{j}=0}^{2k-1}T(P_{s_{j}}T(\cdots P_{s_{1}}
T(P_{s_{0}}T(f_{m}))\cdots))  \label{eqinduct}
\end{equation}
for all homogeneous polynomials $f_{m}$ of degree $m$, with the convention
that the summand for $j=-1$ is $Tf_{m}$.
\end{theorem}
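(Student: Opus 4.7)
The plan is to build an explicit candidate $u$ for $T_{P}(f_m)$ out of the operator $T = T_{P_{2k}}$ and the multiplication operators $M_{P_s}$, verify that $f_m - Pu \in \ker Q(D)$ by a telescoping computation, and then invoke uniqueness of the $(P, Q)$-Fischer decomposition to conclude $u = T_P(f_m)$. The starting point is the algebraic splitting $P = P_{2k} - P_{\mathrm{low}}$, where $P_{\mathrm{low}} := P_{2k-1} + \cdots + P_0 = \sum_{s=0}^{2k-1} P_s$.

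First I would extract a fixed-point identity for $T_P$ in terms of $T$. Starting from the defining equation $f = P\,T_P(f) + R_P(f)$ with $Q(D) R_P(f) = 0$ and moving the low-order part to the other side gives
\begin{equation*}
f + P_{\mathrm{low}} T_P(f) = P_{2k}\,T_P(f) + R_P(f).
\end{equation*}
Since the right-hand side is a valid $(P_{2k}, Q)$-Fischer decomposition of the left-hand side, uniqueness for the pair $(P_{2k}, Q)$ yields
\begin{equation*}
T_P(f) = T(f) + T\bigl(P_{\mathrm{low}} T_P(f)\bigr).
\end{equation*}

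Next I would iterate this identity on a homogeneous $f_m$. Setting $h_{-1} := f_m$ and $h_j := P_{\mathrm{low}} T(h_{j-1})$ for $j \geq 0$, and expanding $P_{\mathrm{low}} = \sum_s P_s$ inside each level, the quantity
\begin{equation*}
u := \sum_{j=-1}^{m} T(h_j)
\end{equation*}
is precisely the right-hand side of (\ref{eqinduct}). A short induction, using that one step of $T$ subtracts $2k$ from every homogeneous degree present and the subsequent multiplication by some $P_s$ adds at most $2k - 1$, shows $\deg h_j \leq m - j - 1$, so $h_j = 0$ for $j \geq m$; in particular $h_{m+1} = 0$ and the sum defining $u$ is really finite. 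To identify $u$ with $T_P(f_m)$ I would use the identity $P_{2k} T(h_j) = h_j - R(h_j)$ (where $R := R_{P_{2k}}$) together with $P_{\mathrm{low}} T(h_j) = h_{j+1}$ to compute
\begin{equation*}
Pu = \sum_{j=-1}^{m}\bigl(P_{2k} T(h_j) - P_{\mathrm{low}} T(h_j)\bigr) = \sum_{j=-1}^{m}\bigl(h_j - R(h_j)\bigr) - \sum_{j=0}^{m+1} h_j.
\end{equation*}
The two $h_j$-sums telescope to $h_{-1} - h_{m+1} = f_m$, giving $f_m = Pu + r$ with $r := \sum_{j=-1}^{m} R(h_j) \in \ker Q(D)$. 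Uniqueness of the $(P, Q)$-Fischer decomposition then forces $u = T_P(f_m)$, which is the stated formula.

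The main (mild) obstacle is the degree bookkeeping: the $h_j$ are in general inhomogeneous, so one must track the maximum degree over all homogeneous components simultaneously and verify that termination $h_{m+1} = 0$ really happens within the range of summation in (\ref{eqinduct}). Once the degree count is carried out, the fixed-point identity, the telescoping identity for $Pu$, and the final appeal to uniqueness are all routine.
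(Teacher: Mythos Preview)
Your argument is correct. The telescoping identity $f_m - Pu = \sum_{j=-1}^{m} R(h_j) \in \ker Q(D)$ does exactly what is needed, and the degree bound $\deg h_j \le m-j-1$ (hence $h_m = h_{m+1} = 0$) is the right termination check. One small remark: the theorem as stated only assumes that $(P_{2k},Q)$ is a Fischer pair, so your final appeal to uniqueness of the $(P,Q)$-decomposition tacitly uses that $T_P$ is well-defined; but since the very statement of the theorem features $T_P$, this is already implicit. In fact your telescoping computation proves, independently of that, the \emph{existence} of a decomposition $f_m = Pu + r$ with $Q(D)r=0$ and $u$ given by the formula --- which is all the paper actually needs downstream.

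The paper takes a different route: it proves the formula by induction on the degree $m$. At the inductive step one writes $f_{m+1} = P_{2k}\,Tf_{m+1} + r_{m+1} = P\,Tf_{m+1} + \sum_{s} P_s Tf_{m+1} + r_{m+1}$, applies the inductive hypothesis to each lower-degree piece $g_s = P_s Tf_{m+1}$, and reassembles. Your approach replaces this induction by a closed recursion $h_j = P_{\mathrm{low}} T(h_{j-1})$ and a single telescoping sum; the fixed-point identity $T_P = T + T\circ M_{P_{\mathrm{low}}}\circ T_P$ you derive at the start is a clean conceptual explanation for why iterating $T\circ M_{P_{\mathrm{low}}}$ is the right thing to do. Both arguments encode the same combinatorics, but yours makes the cancellation visible in one line rather than spread across an inductive verification.
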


\begin{proof}
We will simplify the above expression by omitting parentheses. The proof
follows by induction over the degree $m.$ First we formulate the statement
in a slightly different form:

(i) For each homogeneous polynomial $f_{m}$ of degree $m$ there exists a
decomposition 
\begin{equation*}
f_{m}=Pq_{m}+r_{m},
\end{equation*}
where $q_{m}:= T_P (f_m)$ and $r_{m}$ is a polynomial of degree $\leq m$
satisfying $Q\left( D\right) r_{m}=0$.

For $m=0$ the polynomial $f_{0}$ is constant and $Q\left( D\right) f_{0}=0$,
since $Q$ is homogeneous of degree $2k>0.$ So we have the decomposition $%
f_{0}=P\cdot q_{0}+f_{0}$ with $q_{0}=0.$ On the other hand, for $m=0$ the
right hand side has summands for $j=-1$ and $j=0,$ namely $Tf_{0}$ and $%
\sum_{s_{0}=0}^{2k-1}TP_{s_{0}}Tf_{0}.$ Now use that $Tf_{0}=0.$

Assume that the statement holds for all homogeneous polynomials of degree $%
\leq m.$ Let $f_{m+1}$ be a homogeneous polynomial of degree $m+1.$ Since $%
(P_{2k},Q)$ is a Fischer pair we can write 
\begin{equation*}
f_{m+1}=P_{2k}\cdot Tf_{m+1}+r_{m+1}
\end{equation*}
where $Q\left( D\right) r_{m+1}=0.$ Then 
\begin{equation*}
f_{m+1}=P\cdot Tf_{m+1}+\sum_{s_{m + 1}=0}^{2k-1}P_{s_{m +
1}}Tf_{m+1}+r_{m+1}.
\end{equation*}%
For $0 \le s_{m + 1} \le k - 1$ define $g_{s_{m + 1}}=P_{s_{m + 1}}Tf_{m+1}$%
, which is a homogeneous polynomial of degree $\leq m.$ By the induction
hypothesis we can write 
\begin{equation*}
g_{s_{m + 1}}=P\cdot q_{s_{m + 1}}+r_{s_{m + 1}},
\end{equation*}
where $Q\left( D\right) r_{s_{m + 1}}=0$ and $q_{s_{m + 1}}$ is given by 
\begin{equation*}
q_{s_{m +
1}}=\sum_{j=-1}^{m}\sum_{s_{0}=0}^{2k-1}\sum_{s_{1}=0}^{2k-1}\cdots
\sum_{s_{j}=0}^{2k-1}TP_{s_{j}} \cdots TP_{s_{0}}Tg_{s_{m + 1}}.
\end{equation*}
It follows that 
\begin{equation*}
f_{m+1}=P\left( Tf_{m+1}+\sum_{s_{m + 1}=0}^{2k-1}q_{s_{m + 1}}\right)
+\sum_{s_{m + 1}=0}^{2k-1}r_{s_{m + 1}}+r_{m+1}.
\end{equation*}
This shows that we have a decomposition of $f_{m+1}$ of the desired form.
Using the induction hypothesis we see that $q_{m+1}:=Tf_{m+1}+ \sum_{s_{m +
1}=0}^{2k-1}q_{s_{m + 1}}$, so 
\begin{eqnarray*}
q_{m+1} &=&Tf_{m+1}+\sum_{s_{m +
1}=0}^{2k-1}\sum_{j=-1}^{m}\sum_{s_{0}=0}^{2k-1} \sum_{s_{1}=0}^{2k-1}\cdots
\sum_{s_{j}=0}^{2k-1}TP_{s_{j}} \cdots TP_{s_{0}}T\left( P_{s_{m +
1}}Tf_{m+1}\right) \\
&=&\sum_{j=-1}^{m+1}\sum_{s_{0}=0}^{2k-1}\sum_{s_{1}=0}^{2k-1} \cdots
\sum_{s_{j}=0}^{2k-1}TP_{s_{j}} \cdots TP_{s_{0}}Tf_{m+1}.
\end{eqnarray*}
\end{proof}

For the special case $P=P_{2k}-P_{0}$ we have contributions only when $%
s_{0}= $ $s_{1}=\cdots =s_{j}=0,$ so 
\begin{equation*}
T_{P}\left( f_{m}\right) =\sum_{j=-1}^{m}\left( T\circ M_{P_{0}}\right)
^{j+1}\circ \left( Tf_{m}\right) .
\end{equation*}

Given a real number $a$, set $a^+ := \max\{a, 0\}$.

\begin{proposition}
\label{boundssphere} Let $P_{2k}$ and $Q$ be homogeneous polynomials in $d$
variables, of degree $2k>0$, and assume that $(P_{2k},Q)$ is a Fischer pair
for $\mathcal{P}\left( \mathbb{R}^{d}\right) $. Write $T:=T_{P_{2k}}.$
Suppose there exist $C>0$, $D>0$ and $\alpha \geq 0$ such that 
\begin{equation}
\left\Vert Tf_{m}\right\Vert _{L^{2}(\mathbb{S}^{d-1})}\leq C\left(
m+D\right) ^{\alpha }\left\Vert f_{m}\right\Vert _{L^{2}(\mathbb{S}^{d-1})}
\label{eqTestimate}
\end{equation}%
for every homogeneous polynomial $f_{m}\left( x\right) $ of degree $m.$ Let
the polynomials $P_{j}\left( x\right) $ be homogeneous of degree $j$ for $%
j=0,\dots ,2k$, and define 
\begin{equation*}
D_{s}=\max_{\theta \in \mathbb{S}^{d-1}}\left\vert P_{s}\left( \theta
\right) \right\vert \text{ for }s=0,\dots ,2k-1.
\end{equation*}%
If $\beta <2k$ is a natural number such that $P_{j}=0$ whenever $j=\beta
+1,\dots ,2k-1$, then the following estimate holds for $s_{0},\dots
,s_{j}\in \left\{ 0,\dots ,2k-1\right\} $ and every homogeneous polynomial $%
f_{m}\left( x\right) $ of degree $m$: 
\begin{equation*}
\ \left\Vert TP_{s_{j}}\dots .TP_{s_{0}}Tf_{m}\right\Vert _{L^{2}(\mathbb{S}%
^{d-1})}\leq \left( m+D\right) ^{\alpha (j+1)}C^{j+1}D_{s_{j}}\cdots
D_{s_{0}}\left\Vert f_{m}\right\Vert _{L^{2}(\mathbb{S}^{d-1})}
\end{equation*}%
\begin{equation*}
\leq \left( m+D\right) ^{\frac{m\alpha }{2k-\beta }}C^{j+1}D_{s_{j}}\cdots
D_{s_{0}}\left\Vert f_{m}\right\Vert _{L^{2}(\mathbb{S}^{d-1})}.\ 
\end{equation*}
\end{proposition}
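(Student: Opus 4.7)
The plan is to prove both inequalities by iterating two elementary estimates. The hypothesis on $T$ gives $\|Tg\|_{L^{2}(\mathbb{S}^{d-1})}\leq C(n+D)^{\alpha}\|g\|_{L^{2}(\mathbb{S}^{d-1})}$ for $g$ homogeneous of degree $n$, and the pointwise bound $|P_{s}(\theta)|\leq D_{s}$ on $\mathbb{S}^{d-1}$ immediately yields $\|P_{s}g\|_{L^{2}(\mathbb{S}^{d-1})}\leq D_{s}\|g\|_{L^{2}(\mathbb{S}^{d-1})}$ for any $g\in L^{2}(\mathbb{S}^{d-1})$.

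For the first inequality, I would peel off operators one at a time from the outside of the composition $TP_{s_{j}}\cdots TP_{s_{0}}Tf_{m}$, applying the two bounds in alternation. The key structural observation is that $T$ reduces the degree of a homogeneous polynomial by exactly $2k$, while multiplication by $P_{s_{i}}$ raises it by $s_{i}<2k$; hence every intermediate polynomial in the composition is homogeneous of degree at most $m$. Consequently, each application of the $T$-bound contributes a factor at most $C(m+D)^{\alpha}$, and each multiplication by $P_{s_{i}}$ contributes $D_{s_{i}}$. Collecting these factors produces the first inequality.

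For the second inequality, I would use a degree-counting argument. If the left-hand side vanishes the bound is trivial; otherwise every intermediate polynomial in the composition must be nonzero. Since $T$ annihilates any polynomial of degree less than $2k$ (such a $g$ satisfies $Q(D)g=0$ by homogeneity of $Q$, so the unique Fischer decomposition is $g=P_{2k}\cdot 0+g$), every intermediate degree must stay non-negative. Tracking the total change in degree, the $T$'s subtract $2k$ each, and the $P_{s_{i}}$'s collectively add $s_{0}+\cdots+s_{j}\leq\beta(j+1)$, since the hypothesis $P_{s}=0$ for $\beta<s<2k$ forces $s_{i}\leq\beta$ in any nonzero summand. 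Non-negativity of the final degree then yields $(2k-\beta)(j+1)\leq m$, i.e.\ $\alpha(j+1)\leq m\alpha/(2k-\beta)$, and assuming $m+D\geq 1$ (which we may) the second inequality follows by monotonicity of the power. The main delicate step is careful bookkeeping of degrees; once one observes that $s_{i}\leq\beta$ in any nonzero term and that $T$ kills low-degree polynomials, the rest is purely arithmetical.
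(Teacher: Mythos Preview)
Your proposal is correct and follows essentially the same route as the paper: iterate the bound $\|Tg\|\leq C(\deg g+D)^{\alpha}\|g\|$ together with the pointwise multiplier bound $\|P_{s}g\|\leq D_{s}\|g\|$, using that all intermediate degrees are below $m$, and then obtain the second inequality from the degree constraint $(2k-\beta)(j+1)\leq m$ forced by nonvanishing (via $T$ killing polynomials of degree $<2k$ and $s_{i}\leq\beta$). Your explicit justification that $Tg=0$ for $\deg g<2k$ via the uniqueness of the Fischer decomposition is a nice touch that the paper leaves implicit.
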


\begin{proof}
Let us write $C_{m}=C\left( m+D\right) ^{\alpha }.$ Note that $%
P_{s_{j}}T\dots P_{s_{0}}Tf_{m}$ is a homogeneous polynomial of degree 
\begin{equation}
d\left( s_{0},\dots ,s_{j}\right) :=\deg P_{s_{j}}T\dots .P_{s_{0}}Tf_{m}=
[m+\left( s_{0}-2k\right) +\dots +\left( s_{j}-2k\right)]^+ <m.
\label{eqdnull}
\end{equation}%
If $d\left( s_{0},\dots ,s_{j}\right) -2k$ is negative, this means $%
P_{s_{j}}T\dots .P_{s_{0}}Tf_{m}$ has degree $<2k,$ hence we see that 
\begin{equation*}
T\left[ P_{s_{j}}T\dots P_{s_{0}}Tf_{m}\right] =0.
\end{equation*}%
If $d\left( s_{0},\dots ,s_{j}\right) -2k\geq 0$, using $P_{s_{j}}=0$ when $%
s_{j}>\beta $, from (\ref{eqdnull}) get the following estimate for $j$: 
\begin{equation*}
j+1\leq j + 2 \le \frac{m+s_{0}+\dots +s_{j}}{2k}\leq \frac{m+\beta \left(
j+1\right) }{2k}.
\end{equation*}%
This is equivalent to 
\begin{equation*}
j+1\leq \frac{m}{2k-\beta }.
\end{equation*}

Now use the estimate (\ref{eqTestimate}) to see that 
\begin{eqnarray*}
\left\Vert TP_{s_{j}}\dots TP_{s_{0}}Tf_{m}\right\Vert _{L^2(\mathbb{S}%
^{d-1})} &\leq &C_{d_{j}}\left\Vert P_{s_{j}}TP_{s_{j-1}}\dots
TP_{s_{0}}Tf_{m}\right\Vert _{L^2(\mathbb{S}^{d-1})} \\
&\leq &C_{d_{j}}D_{s_{j}}\left\Vert TP_{s_{j-1}}\dots
TP_{s_{0}}Tf_{m}\right\Vert _{L^2(\mathbb{S}^{d-1})}.
\end{eqnarray*}%
Note that $C_{d_{j}}\leq C_{m}$. We can iterate the process, obtaining 
\begin{eqnarray*}
\left\Vert TP_{s_{j}}\dots TP_{s_{0}}Tf_{m}\right\Vert _{L^2(\mathbb{S}%
^{d-1})} &\leq &C_{m}^{j+1}\cdot D_{s_{j}}\cdots D_{s_{0}}\left\Vert
f_{m}\right\Vert _{L^2(\mathbb{S}^{d-1})} \\
&\leq &\left( m+D\right) ^{\frac{m\alpha }{2k-\beta }}C^{j+1}D_{s_{j}}\cdots
D_{s_{0}}\left\Vert f_{m}\right\Vert _{L^2(\mathbb{S}^{d-1})}.
\end{eqnarray*}
\end{proof}

The main difference between the next result and Theorem \ref{ThmMain1} is
that here we assume $\left( P,\Delta ^{k}\right) $ is a Fischer pair for $%
\mathcal{P}\left( \mathbb{R}^{d}\right) $.

\begin{theorem}
\label{Thm6} Let $P_{2k}$ be a homogeneous polynomial in $d$ variables, of
degree $2k>0$, and let $P$ be a polynomial of degree $2k$, having the form (%
\ref{eqHom}). Assume that $\left( P,\Delta ^{k}\right) $ is a Fischer pair
for $\mathcal{P}\left( \mathbb{R}^{d}\right) $. Write $T:=T_{2k}.$ Suppose
there exist $C>0$, $D>0$ and $\alpha \geq 0$ such that 
\begin{equation*}
\left\Vert Tf_{m}\right\Vert _{L^{2}(\mathbb{S}^{d-1})}\leq C\left(
m+D\right) ^{\alpha }\left\Vert f_{m}\right\Vert _{L^{2}(\mathbb{S}^{d-1})}
\end{equation*}%
for every homogeneous polynomial $f_{m}$ of degree $m$, and let $\beta <2k$
be a natural number such that $P_{j}=0$ whenever $j=\beta +1,\dots ,2k-1$.
Then for every entire function $f$ of order $\rho <\left( 2k-\beta \right)
/\alpha $, there are entire functions $q$ and $h$ of order $\leq \rho $,
satisfying 
\begin{equation}
f=\left( P_{2k}-P_{\beta }-\cdots -P_{0}\right) q +h\text{ and }\Delta
^{k}h=0.  \label{eqdecomp11}
\end{equation}
\end{theorem}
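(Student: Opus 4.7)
The strategy is to carry out the Fischer decomposition separately on each homogeneous component $f_{m}$ of the Taylor expansion $f=\sum f_{m}$, then show that the series of Fischer quotients sums to an entire function $q$ of order at most $\rho$; the entire harmonic remainder $h$ will follow by subtraction. Since $\rho_{\mathbb{R}^{d}}(f)\le\rho$, for every $\varepsilon>0$ we have $\|f_{m}\|_{L^{2}(\mathbb{S}^{d-1})}\le\sqrt{\omega_{d-1}}\,m^{-m/(\rho+\varepsilon)}$ for $m$ large. Because $(P,\Delta^{k})$ is a Fischer pair on $\mathcal{P}(\mathbb{R}^{d})$, we may apply it at each level: write $f_{m}=P\,q_{m}+r_{m}$ with $q_{m}=T_{P}(f_{m})$ and $\Delta^{k}r_{m}=0$.

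To bound $q_{m}$, invoke the explicit formula (\ref{eqinduct}), which expresses $T_{P}(f_{m})$ as a finite sum of ``path operators'' $TP_{s_{j}}T\cdots TP_{s_{0}}Tf_{m}$ where $T=T_{P_{2k}}$ and the $s_{i}$ range only over $\{0,\dots,\beta\}$, since $P_{j}=0$ for $j=\beta+1,\dots,2k-1$. Each such path is a homogeneous polynomial of degree $d(s_{0},\dots,s_{j})=m-2k(j+2)+(s_{0}+\cdots+s_{j})$, and Proposition \ref{boundssphere} bounds its $L^{2}(\mathbb{S}^{d-1})$ norm by $(m+D)^{(j+1)\alpha}C^{j+1}D_{s_{j}}\cdots D_{s_{0}}\|f_{m}\|_{L^{2}(\mathbb{S}^{d-1})}$.

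Now define $q:=\sum_{m}q_{m}$ formally and let $q^{(n)}$ denote its degree-$n$ homogeneous component. The contributions to $q^{(n)}$ come from those tuples $(m,j,s_{0},\dots,s_{j})$ for which $d(s_{0},\dots,s_{j})=n$; given $n,j,s$, this pins down $m=n+2k(j+2)-(s_{0}+\cdots+s_{j})$, which lies in the range $n+2k+(2k-\beta)(j+1)\le m\le n+2k(j+2)$. Upgrading the $L^{2}$ bound to $L^{\infty}$ on $\mathbb{S}^{d-1}$ via Theorem \ref{thm:norm} (which costs only a polynomial factor $(1+n)^{(d-1)/2}$), substituting the decay estimate on $\|f_{m}\|_{L^{2}}$ from the first paragraph, and summing the $(\beta+1)^{j+1}$ admissible $s$-tuples for each $j$, one arrives at an upper bound for $\max_{\theta}|q^{(n)}(\theta)|$ of the form
\begin{equation*}
C_{0}(1+n)^{(d-1)/2}\sum_{j\ge -1}\bigl(K(\beta+1)\bigr)^{j+1}\bigl(n+2k(j+2)+D\bigr)^{(j+1)\alpha}\,m_{*}^{-m_{*}/(\rho+\varepsilon)},
\end{equation*}
where $m_{*}=n+2k+(2k-\beta)(j+1)$ and $K:=C\max_{s}D_{s}$. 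The hypothesis $\rho<(2k-\beta)/\alpha$ provides, for $\varepsilon$ small enough, $\alpha<(2k-\beta)/(\rho+\varepsilon)$, so the ratio of consecutive $j$-terms decays like a negative power of $n$; the series is dominated by its $j=-1$ summand, which is bounded by a constant multiple of $(n+2k)^{-(n+2k)/(\rho+\varepsilon)}$. Criterion (\ref{eqtaylor}) then yields $\rho_{\mathbb{R}^{d}}(q)\le\rho$. Finally, set $h:=f-Pq$, which is entire of order $\le\rho$; the identity $h=\sum_{m}r_{m}$ together with uniform convergence on compacta and the linearity of $\Delta^{k}$ yield $\Delta^{k}h=0$.

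The main obstacle is the balancing act in the third paragraph. Three forces compete: the amplification $(m+D)^{(j+1)\alpha}$ from iterates of $T$, the combinatorial blow-up $(\beta+1)^{j+1}$ from the nested sums in (\ref{eqinduct}), and the super-exponential decay $m^{-m/(\rho+\varepsilon)}$ supplied by the hypothesis on $f$. One must organize the estimate so that the $j=-1$ summand genuinely dominates, producing the sharp rate $n^{-n/(\rho+\varepsilon)}$ in $n$; only then will the order of $q$ come out equal to $\rho$ (and not be weakened to $(2k-\beta)/\alpha$, say), so that the same bound transfers to $h=f-Pq$.
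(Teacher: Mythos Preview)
Your proposal is correct and follows essentially the same route as the paper: decompose each homogeneous $f_{m}$ via $T_{P}$, expand $T_{P}(f_{m})$ through formula (\ref{eqinduct}), regroup the resulting path operators by their degree, bound each piece with Proposition \ref{boundssphere} and Theorem \ref{thm:norm}, feed in the Taylor decay (\ref{eqtaylor}), and then control the $j$-sum using $\alpha<(2k-\beta)/(\rho+\varepsilon)$. The only cosmetic difference is that the paper packages the $s$-sum as $\widetilde{D}^{j+1}=(D_{0}+\cdots+D_{2k-1})^{j+1}$ rather than your $(K(\beta+1))^{j+1}$, and it bounds the $j$-series by an explicit geometric series (each term $\le(1/2)^{j+1}$ for large $M$) rather than arguing via ratios; both lead to the same conclusion.
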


\begin{proof}
Assume $f$ is an entire function $f$ of order $<\left( 2k-\beta \right)
/\alpha $ and write $f=\sum_{m=0}^{\infty }f_{m}$, where the homogeneous
polynomials $f_{m}$ are given by (\ref{taylor}). Then $T_{P}\left(
f_{m}\right)$ is either the zero polynomial or a polynomial of degree $<m$
(not necessarily homogeneous). Our strategy is to show that 
\begin{equation}
q :=\sum_{m=0}^{\infty }T_{P}\left( f_{m}\right)  \label{eqgconvergence}
\end{equation}%
converges absolutely and uniformly on compacta, and hence it defines an
entire function $q:\mathbb{R}^{d}\rightarrow \mathbb{R}.$

Next, we determine the order of $q$ by writing $q\left( x\right)
=\sum_{M=0}^{\infty }G_{M}\left( x\right) $, where each $G_{M}$ is a
homogeneous polynomial of degree $M$ (as a reminder, we mention that when it
exists this representation is unique). Recall that 
\begin{equation}
T_{P}\left( f_{m}\right)
=\sum_{j=-1}^{m}\sum_{s_{0}=0}^{2k-1}\sum_{s_{1}=0}^{2k-1}\dots
\sum_{s_{j}=0}^{2k-1}TP_{s_{j}}\cdots TP_{s_{0}}Tf_{m},  \label{gem}
\end{equation}%
and clearly, $TP_{s_{j}}\cdots TP_{s_{0}}Tf_{m}$ is a homogeneous polynomial
of degree 
\begin{equation*}
s_{0}+\cdots +s_{j}+m-2k\left( j+2\right) .
\end{equation*}%
If $s_{0},\dots ,s_{n}\in \left\{ 0,\dots ,2k-1\right\} $ are given, where $%
n>m$, then $TP_{s_{n}}\cdots TP_{s_{0}}Tf_{m}$ is zero by inspection of its
degree 
\begin{equation*}
m+s_{0}+\dots +s_{n}-2k\left( n+2\right) \leq m+\left( n+1\right) \left(
2k-1\right) -2k\left( n+2\right)
\end{equation*}%
\begin{equation*}
=m-2k-n-1<0,
\end{equation*}%
so the $m$ in the first summatory of (\ref{gem}) can be replaced by $\infty $%
. By hypothesis, $P_{s}=0$ for all $s\in \left\{ \beta +1,\dots
,2k-1\right\} $, so we can write 
\begin{equation}
T_{P}\left( f_{m}\right) =\sum_{j=-1}^{\infty }\sum_{s_{0}=0}^{\beta
}\sum_{s_{1}=0}^{\beta }\dots \sum_{s_{j}=0}^{\beta }TP_{s_{j}}\cdots
TP_{s_{0}}Tf_{m}.  \label{gema}
\end{equation}%
In order to show that the sum in (\ref{eqgconvergence}) converges absolutely
and uniformly on compacta it suffices to show that 
\begin{equation*}
G:=\sum_{j=-1}^{\infty }\sum_{s_{0}=0}^{\beta }\sum_{s_{1}=0}^{\beta }\cdots
\sum_{s_{j}=0}^{\beta }\sum_{m=0}^{\infty }TP_{s_{j}}\cdots TP_{s_{0}}Tf_{m}
\end{equation*}%
does so. Then the sum can be reordered and shown to be equal to $g$. Next we
collect all summands having degree $M\geq 0$. The requirement 
\begin{equation*}
\deg TP_{s_{j}}\cdots TP_{s_{0}}Tf_{m}=s_{0}+\cdots +s_{j}+m-2k\left(
j+2\right) =M
\end{equation*}%
means that $m=M+2k\left( j+2\right) -\left( s_{0}+\cdots +s_{j}\right) $,
and therefore we consider the sum 
\begin{equation*}
G_{M}:=\sum_{j=-1}^{\infty }\sum_{s_{0}=0}^{\beta }\sum_{s_{1}=0}^{\beta
}\dots \sum_{s_{j}=0}^{\beta }TP_{s_{j}}\cdots TP_{s_{0}}Tf_{M+2k\left(
j+2\right) -\left( s_{0}+\cdots +s_{j}\right) }.
\end{equation*}%
We show next that $G_{M}$ converges absolutely everywhere. By Theorem \ref%
{thm:norm}, for every $\theta \in \mathbb{S}^{d-1}$ we have 
\begin{equation}
\left\vert TP_{s_{j}}\cdots TP_{s_{0}}Tf_{M+2k\left( j+2\right) -\left(
s_{0}+\cdots +s_{j}\right) }\left( \theta \right) \right\vert  \label{thm3}
\end{equation}%
\begin{equation}
\leq \frac{\sqrt{2}}{\sqrt{\omega _{d-1}}}\left( 1+M\right) ^{\left(
d-1\right) /2}\ \left\Vert TP_{s_{j}}\cdots TP_{s_{0}}Tf_{M+2k\left(
j+2\right) -\left( s_{0}+\cdots +s_{j}\right) }\right\Vert _{L^{2}(\mathbb{S}%
^{d-1})}.  \label{thm3b}
\end{equation}%
Recall the following notation (used in Proposition \ref{boundssphere}): 
\begin{equation*}
D_{s}=\max_{\theta \in \mathbb{S}^{d-1}}\left\vert P_{s}\left( \theta
\right) \right\vert \text{ for }s=0,\dots ,2k-1.
\end{equation*}%
Since $m\leq M+2k\left( j+2\right) $, by Proposition \ref{boundssphere} 
\begin{equation}
\left\Vert TP_{s_{j}}\cdots TP_{s_{0}}Tf_{M+2k\left( j+2\right) -\left(
s_{0}+\cdots +s_{j}\right) }\right\Vert _{L^{2}(\mathbb{S}^{d-1})}
\label{prop5}
\end{equation}%
\begin{equation}
\leq \left( M+2k\left( j+2\right) +D\right) ^{\alpha \left( j+1\right)
}C^{j+1}D_{s_{j}}\cdots D_{s_{0}}\left\Vert f_{M+2k\left( j+2\right) -\left(
s_{0}+\cdots +s_{j}\right) }\right\Vert _{L^{2}(\mathbb{S}^{d-1})}.
\label{prop5b}
\end{equation}%
By assumption $\frac{2k-\beta }{\alpha }>\rho $. Since $f$ has order $\rho $%
, the bound (\ref{eqtaylor}) entails that for every $\varepsilon >0$ there
exists a constant $A_{\varepsilon }$ such that 
\begin{equation}
\left\Vert f_{m}\right\Vert _{L^{2}(\mathbb{S}^{d-1})}\leq \frac{%
A_{\varepsilon }}{m^{\frac{m}{\rho +\varepsilon }}}  \label{eqAA}
\end{equation}%
for all natural numbers $m>0.$ We may, without loss of generality, assume
that 
\begin{equation*}
\frac{2k-\beta }{\rho +\varepsilon }-\alpha >0.
\end{equation*}
Note that the right hand side of (\ref{eqAA}) is a decreasing function of $m$
and that 
\begin{equation*}
m\geq M+2k\left( j+2\right) -\beta \left( j+1\right) =M+2k+\left( 2k-\beta
\right) \left( j+1\right) .
\end{equation*}%
We infer that 
\begin{equation}
\left\Vert f_{M+2k\left( j+2\right) -\left( s_{0}+\cdots +s_{j}\right)
}\right\Vert _{L^{2}(\mathbb{S}^{d-1})}\leq \frac{A_{\varepsilon }}{\left(
M+2k+\left( 2k-\beta \right) \left( j+1\right) \right) ^{\frac{M+2k+\left(
2k-\beta \right) \left( j+1\right) }{\rho +\varepsilon }}}.  \label{eqAAA}
\end{equation}%
Now 
\begin{eqnarray*}
&&\left( M+2k+\left( 2k-\beta \right) \left( j+1\right) \right) ^{\frac{%
M+2k+\left( 2k-\beta \right) \left( j+1\right) }{\rho +\varepsilon }} \\
&=&\left( M+2k+\left( 2k-\beta \right) \left( j+1\right) \right) ^{\frac{M+2k%
}{\rho +\varepsilon }}\left( M+2k+\left( 2k-\beta \right) \left( j+1\right)
\right) ^{\frac{\left( 2k-\beta \right) \left( j+1\right) }{\rho
+\varepsilon }} \\
&\geq &\left( M+2k\right) ^{\frac{M+2k}{\rho +\varepsilon }}\left(
M+2k+\left( 2k-\beta \right) \left( j+1\right) \right) ^{\frac{\left(
2k-\beta \right) \left( j+1\right) }{\rho +\varepsilon }}.
\end{eqnarray*}%
Furthermore, we have 
\begin{equation*}
\sum_{s_{0}=0}^{2k-1}\sum_{s_{1}=0}^{2k-1}\dots
\sum_{s_{j}=0}^{2k-1}D_{s_{j}}\cdots D_{s_{0}}=\left( D_{0}+\cdots
+D_{2k-1}\right) ^{j+1}.
\end{equation*}%
Define $\widetilde{D}:=D_{0}+\cdots +D_{2k-1}.$ Using (\ref{thm3})-(\ref%
{thm3b}), (\ref{prop5})-(\ref{prop5b}) and (\ref{eqAAA}) we get 
\begin{eqnarray}
G_{M}^{\left( j\right) } &:&=\sum_{s_{0}=0}^{2k-1}\sum_{s_{1}=0}^{2k-1}\dots
\sum_{s_{j}=0}^{2k-1}\left\vert TP_{s_{j}}\dots TP_{s_{0}}Tf_{M+2k\left(
j+2\right) -\left( s_{0}+\cdots +s_{j}\right) }\right\vert  \label{eqDefGMJ}
\\
&\leq &\frac{\sqrt{2}A_{\varepsilon }}{\sqrt{\omega _{d-1}}}\frac{\left(
M+1\right) ^{\frac{d-1}{2}}\left( M+2k\left( j+2\right) +D\right) ^{\alpha
\left( j+1\right) }C^{j+1}}{\left( M+2k\right) ^{\frac{M+2k}{\rho
+\varepsilon }}\left( M+2k+\left( 2k-\beta \right) \left( j+1\right) \right)
^{\frac{\left( 2k-\beta \right) \left( j+1\right) }{\rho +\varepsilon }}}\ 
\widetilde{D}^{j+1}.  \label{eqDefGMJ2}
\end{eqnarray}%
Note that 
\begin{equation*}
\frac{\left( M+2k\left( j+2\right) +D\right) ^{\alpha }}{\left( M+2k+\left(
2k-\beta \right) \left( j+1\right) \right) ^{\frac{\left( 2k-\beta \right) }{%
\rho +\varepsilon }}}=\frac{\left( 2k\right) ^{\alpha }}{\left( 2k-\beta
\right) ^{\frac{\left( 2k-\beta \right) }{\rho +\varepsilon }}}\frac{\left( 
\frac{M+2k+D}{2k}+j+1\right) ^{\alpha }}{\left( \frac{M+2k}{2k-\beta }%
+j+1\right) ^{\frac{\left( 2k-\beta \right) }{\rho +\varepsilon }}}.
\end{equation*}%
For all $M,j\in \mathbb{N}_{0}$ the inequality 
\begin{equation*}
\frac{M+2k+D}{2k}+j+1\leq \left( D+1\right) \left( \frac{M+2k}{2k-\beta }%
+j+1\right)
\end{equation*}%
holds since 
\begin{eqnarray*}
&&\left( D+1\right) \left( M+2k\right) 2k-\left( M+2k+D\right) \left(
2k-\beta \right) \\
&=&D\left( M+2k\right) 2k+\beta \left( M+2k+D\right) -2Dk\geq 0.
\end{eqnarray*}%
Thus, 
\begin{equation*}
\frac{\left( M+2k\left( j+2\right) +D\right) ^{\alpha }}{\left( M+2k+\left(
2k-\beta \right) \left( j+1\right) \right) ^{\frac{\left( 2k-\beta \right) }{%
\rho +\varepsilon }}}\leq \frac{\left( 2k\right) ^{\alpha }\left( D+1\right)
^{\alpha }}{\left( 2k-\beta \right) ^{\frac{\left( 2k-\beta \right) }{\rho
+\varepsilon }}}\frac{\left( \frac{M+2k}{2k-\beta }+j+1\right) ^{\alpha }}{%
\left( \frac{M+2k}{2k-\beta }+j+1\right) ^{\frac{\left( 2k-\beta \right) }{%
\rho +\varepsilon }}},
\end{equation*}%
so 
\begin{equation*}
G_{M}^{\left( j\right) }\leq \frac{\sqrt{2}A_{\varepsilon }}{\sqrt{\omega
_{d-1}}}\frac{\left( M+1\right) ^{\frac{d-1}{2}}}{\left( M+2k\right) ^{\frac{%
M+2k}{\rho +\varepsilon }}}\left( \frac{C\widetilde{D}\left( 2k\right)
^{\alpha }\left( D+1\right) ^{\alpha }}{\left( 2k-\beta \right) ^{\frac{%
\left( 2k-\beta \right) }{\rho +\varepsilon }}\left( \frac{M+2k}{2k-\beta }%
+j+1\right) ^{\frac{\left( 2k-\beta \right) }{\rho +\varepsilon }-\alpha }}%
\right) ^{j+1}.
\end{equation*}

Recall that we have choosen $\varepsilon >0$ such that $\gamma :=\frac{%
2k-\beta }{\rho +\varepsilon }-\alpha >0$. For any fixed $M\geq 0$, we can
find a natural number $j_{0}$ such that for all $j\geq j_{0}$ 
\begin{equation*}
\frac{C\widetilde{D}\left( 2k\right) ^{\alpha }\left( D+1\right) ^{\alpha }}{%
\left( 2k-\beta \right) ^{\frac{\left( 2k-\beta \right) }{\rho +\varepsilon }%
}\left( \frac{M+2k}{2k-\beta }+j+1\right) ^{\frac{\left( 2k-\beta \right) }{%
\rho +\varepsilon }-\alpha }}\leq \frac{1}{2}.
\end{equation*}%
It follows that $|G_{M}|\leq \sum_{j=0}^{\infty }G_{M}^{\left( j\right) }$
and the latter series converges, so $G_{M}$ is well-defined for every $M\in 
\mathbb{N}_{0}.$

Next we want to estimate $G_{M}$ for $M \gg 1$. Take $M_{0}$ so large such
that for all $M\geq M_{0}$%
\begin{equation*}
\frac{C\widetilde{D}\left( 2k\right) ^{\alpha }\left( D+1\right) ^{\alpha }}{%
\left( 2k-\beta \right) ^{\frac{\left( 2k-\beta \right) }{\rho +\varepsilon }%
}}\frac{1}{\frac{M+2k}{2k-\beta }^{\frac{\left( 2k-\beta \right) }{\rho
+\varepsilon }-\alpha }}\leq \frac{1}{2}.
\end{equation*}%
Then for every $M\geq M_{0}$, 
\begin{equation*}
|G_{M}| \le \sum_{j=0}^{\infty }G_{M}^{\left( j\right)} \leq \frac{\sqrt{2}%
A_{\varepsilon }}{\sqrt{\omega _{d-1}}}\frac{\left( M+1\right) ^{\frac{d-1}{2%
}}}{\left( M+2k\right) ^{\frac{M+2k}{\rho +\varepsilon }}}\sum_{j=0}^{\infty
}\frac{1}{2^{j+1}}.
\end{equation*}%
Using the criterion from (\ref{eqtaylor}) and the observation after (\ref%
{LindPring}), it follows that $G =\sum_{M=0}^{\infty }G_{M}$ is a
well-defined entire function of order $\le \rho .$ Finally, since both $f$
and $\left( P_{2k}-P_{\beta }-\cdots -P_{0}\right) g$ have order $\le \rho$,
so does $h$.
\end{proof}

If $\alpha =0$ then $\left( 2k-\beta \right) /\alpha =\infty$, so by Theorem %
\ref{Thm6}, for every entire function $f$ of order $0<\rho <\infty $ we can
find entire functions $q$ and $h$ of order $\leq \rho $ such that (\ref%
{eqdecomp11}) holds. Since we work only with upper estimates we cannot
conclude that $q$ and $h$ have exact order $\rho .$ However, if $q$ does
have order $\rho $, then the type of $q$ is bounded by the type of $f.$ When 
$\alpha >0$, Theorem \ref{Thm6} provides a Fischer decomposition only for
entire functions of order $\rho <\left( 2k-\beta \right) /\alpha $, and it
is unclear whether one can find Fischer decompositions for entire functions
of higher order. However, when $\rho =\left( 2k-\beta \right) /\alpha $, we
can prove the existence of Fischer decompositions by assuming that the type
of $f$ is sufficiently small for $\alpha > 0$ (no such restriction is needed
if $\alpha = 0$). This is the content of the next theorem. Let us also
recall the notation $D_{s}:=\max_{\theta \in \mathbb{S}^{d-1}}\left\vert
P_{s}\left( \theta \right) \right\vert .$

\begin{theorem}
\label{Thm6b}Let $P_{2k}$ be a homogeneous polynomial in $d$ variables, of
degree $2k>0$, and let $P$ be a polynomial of degree $2k$, having the form (%
\ref{eqHom}). Assume that $\left( P,\Delta ^{k}\right) $ is a Fischer pair
for $\mathcal{P}\left( \mathbb{R}^{d}\right) $. Write $T:=T_{2k}.$ Suppose
there exist $C>0$, $D>0$ and $\alpha \geq 0$ such that 
\begin{equation*}
\left\Vert Tf_{m}\right\Vert _{L^{2}(\mathbb{S}^{d-1})}\leq C\left(
m+D\right) ^{\alpha }\left\Vert f_{m}\right\Vert _{L^{2}(\mathbb{S}^{d-1})}
\end{equation*}%
for every homogeneous polynomial $f_{m}$ of degree $m$, and let $\beta <2k$
be a natural number such that $P_{j}=0$ whenever $j=\beta +1,\dots ,2k-1$.
If $\alpha =0$, then for every entire function $f$ of finite order $0<\rho
<\infty $ there exist entire functions $q$ and $h$ of order bounded by $\rho 
$, with 
\begin{equation}
f=\left( P_{2k}-P_{\beta }-\dots -P_{0}\right) q + h\text{ and }\Delta
^{k}h=0.  \label{eqdecomp12}
\end{equation}
Moreover, if $q$ has order $\rho $ then the type of $q$ is smaller than or
equal to the type of $f.$ If $\alpha >0,$ then for every entire function $f$
of order $\rho =\frac{2k-\beta }{\alpha }$ and type $\tau $ satisfying 
\begin{equation*}
\ \frac{\left( 2k\right) ^{\frac{2k-\beta }{\rho }}}{\left( 2k-\beta \right)
^{\frac{2k-\beta }{\rho }}}\ C\left( D_{0}+\cdots +D_{\beta }\right) \left(
e\rho \tau \right) ^{\frac{2k-\beta }{\rho }}<1,
\end{equation*}
there exist entire functions $q$ and $h$ of order $\leq \rho $ such that ( %
\ref{eqdecomp12}) holds. Moreover, if $q$ has order $\rho $ then the type of 
$q$ is smaller than or equal to the type of $f$.
\end{theorem}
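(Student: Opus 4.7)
The plan is to follow the proof of Theorem \ref{Thm6} closely, retaining the decomposition $q := \sum_{m\ge 0} T_P(f_m) = \sum_{M\ge 0} G_M$ and the bound $|G_M|\le \sum_{j\ge -1} G_M^{(j)}$ from (\ref{eqDefGMJ})--(\ref{eqDefGMJ2}), but to replace the crude Taylor-coefficient bound (\ref{eqtaylor}) by the sharper Lindel\"of--Pringsheim bound (\ref{LindPring}), which encodes the type $\tau$ of $f$. The converse of Lindel\"of--Pringsheim, applied to the resulting estimate on $\max_{\theta\in\mathbb{S}^{d-1}}|G_M(\theta)|$, will yield that $q$ (and therefore $h$) is entire of order $\le\rho$, with type bounded by $\tau$ whenever the order equals $\rho$. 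Throughout I write $L:=M+2k$ and $J:=2k-\beta$, and I use $\widetilde{D}:=D_0+\cdots+D_\beta$ (note $D_j=0$ for $j>\beta$).

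For the case $\alpha=0$, the bound $\|T f_m\|_{L^2(\mathbb{S}^{d-1})}\le C\|f_m\|_{L^2(\mathbb{S}^{d-1})}$ is uniform in $m$. Combining Proposition \ref{boundssphere}, Theorem \ref{thm:norm} and the Lindel\"of--Pringsheim bound (\ref{LindPring}) on $\|f_m\|$ with $m\ge L+J(j+1)$ (which holds eventually, since the right-hand side of (\ref{LindPring}) is decreasing in $m$ for $m$ large), and splitting the exponent $[L+J(j+1)]/\rho$ as $L/\rho+J(j+1)/\rho$, I obtain for $M$ large
\[
G_M^{(j)}\le \frac{C'(M+1)^{(d-1)/2}(e\rho\tau+\varepsilon)^{L/\rho}}{L^{L/\rho}}\left[\frac{C\widetilde{D}(e\rho\tau+\varepsilon)^{J/\rho}}{[L+J(j+1)]^{J/\rho}}\right]^{j+1}.
\]
The bracketed factor tends to $0$ as $j\to\infty$, so $\sum_j G_M^{(j)}$ is bounded by a constant multiple of the $M$-dependent prefactor. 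Feeding this into the converse of Lindel\"of--Pringsheim yields the claim (existence being already guaranteed by Theorem \ref{Thm6}, the new content is the type estimate).

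For the critical case $\alpha>0$ and $\rho=J/\alpha$, the same decomposition works, but the extra factor $(M+2k(j+2)+D)^{\alpha(j+1)}$ from Proposition \ref{boundssphere} is no longer dominated by $[L+J(j+1)]^{J(j+1)/(\rho+\varepsilon)}$ with any positive leftover power of $j$. Using instead the identity $\alpha=J/\rho$, the two exponents match, so the ratio of the relevant terms equals $\bigl((M+2k(j+2)+D)/(L+J(j+1))\bigr)^{J(j+1)/\rho}$, whose base tends to $2k/J$ as $j\to\infty$. For every $\delta>0$ and every $j\ge j_0(M)$ large enough, the $j$-th factor (absorbing the $(e\rho\tau+\varepsilon)^{J(j+1)/\rho}$ coming from Lindel\"of--Pringsheim) is therefore bounded by
\[
\left[\,C\widetilde{D}\left(\tfrac{2k}{2k-\beta}\right)^{(2k-\beta)/\rho}\bigl((1+\delta)(e\rho\tau+\varepsilon)\bigr)^{(2k-\beta)/\rho}\right]^{j+1}.
\]
Under the smallness hypothesis on $\tau$, choosing $\delta,\varepsilon>0$ small enough makes this base strictly less than $1$, so the $j$-series converges geometrically; the finitely many terms with $j<j_0(M)$ are absorbed into constants, and the same $M$-dependent envelope $(e\rho\tau+\varepsilon)^{L/\rho}/L^{L/\rho}$ as in the $\alpha=0$ case appears. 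Invoking Lindel\"of--Pringsheim in reverse concludes the argument.

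The main obstacle is the quantitative passage from the ratio $(M+2k(j+2)+D)/(L+J(j+1))$ to its limit $2k/J$: the exponent $J(j+1)/\rho$ is large, and even tiny perturbations of the base could destroy convergence. I expect to handle this by fixing $\delta,\varepsilon>0$ small enough so that the hypothesis remains strictly satisfied when $e\rho\tau$ is replaced by $(1+\delta)(e\rho\tau+\varepsilon)$, splitting the $j$-sum into a geometric tail $j\ge j_0(M)$ and a finite initial segment that is absorbed into the constant, and verifying that the resulting threshold $j_0(M)$ does not spoil the $M$-envelope (this is routine, since the "ratio" converges to its limit uniformly in $M$ once $M$ is bounded, and for large $M$ the ratio is itself close to $2k/J$ for all $j\ge 0$). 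The type bound for $h=f-(P_{2k}-P_\beta-\cdots-P_0)q$ then follows from the standard fact that the type of a difference of entire functions of the same order is at most the maximum of their types.
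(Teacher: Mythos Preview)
Your proposal is correct and follows essentially the same route as the paper's proof: both reuse the decomposition and estimates from Theorem~\ref{Thm6}, replace the coarse bound (\ref{eqtaylor}) by the Lindel\"of--Pringsheim estimate (\ref{LindPring}), and in the critical case $\alpha=(2k-\beta)/\rho$ exploit that the growth exponents in numerator and denominator match, factoring out $(2k/(2k-\beta))^{(2k-\beta)/\rho}$ and bounding the residual ratio by $1+\delta$ for $j$ large (respectively for all $j$ once $M$ is large). The paper writes the $(1+\delta)$ correction as a multiplicative factor outside the $(2k-\beta)/\rho$-th power rather than inside as you do, but this is an inessential reparametrization of $\delta$; your identification of the uniformity-in-$M$ issue and its resolution is exactly what the paper does.
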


\begin{proof}
We proceed as in the last proof, but unlike the preceding result, for which
the estimate from (\ref{eqtaylor}) sufficed, here we will need the sharper
bound given in (\ref{LindPring}): from the Theorem of Lindel\"{o}f and
Pringsheim we conclude that for every $\varepsilon >0$ there exists an $%
A_{\varepsilon }>0$ such that for every $m\geq 1$, 
\begin{equation}
\left\Vert f_{m}\right\Vert _{L^{2}(\mathbb{S}^{d-1})}\leq \frac{%
A_{\varepsilon }}{m^{\frac{m}{\rho }}}\left( e\rho \tau +\varepsilon \right)
^{m/\rho }.  \label{type}
\end{equation}%
The right hand side is clearly decreasing on $m$ for $m\geq e\rho \tau
+\varepsilon $. Arguing as in the preceding proof and using (\ref{type}),
for $j\geq e\rho \tau +\varepsilon $ or $M\geq e\rho \tau +\varepsilon $ we
have 
\begin{equation*}
\left\Vert f_{M+2k\left( j+2\right) -\left( s_{0}+\cdots +s_{j}\right)
}\right\Vert _{L^{2}(\mathbb{S}^{d-1})}\leq \frac{A_{\varepsilon }\left(
e\rho \tau +\varepsilon \right) ^{\frac{M+2k+\left( 2k-\beta \right) \left(
j+1\right) }{\rho }}}{\left( M+2k+\left( 2k-\beta \right) \left( j+1\right)
\right) ^{\frac{M+2k+\left( 2k-\beta \right) \left( j+1\right) }{\rho }}}\ .
\end{equation*}%
Following the arguments and using the notation from the last proof, we
obtain the estimate 
\begin{equation*}
G_{M}^{\left( j\right) }\leq \frac{\sqrt{2}A_{\varepsilon }}{\sqrt{\omega
_{d-1}}}\frac{\left( M+1\right) ^{\frac{d-1}{2}}\left( M+2k\left( j+2\right)
+D\right) ^{\alpha \left( j+1\right) }C^{j+1}\widetilde{D}^{j+1}}{\left(
M+2k\right) ^{\frac{M+2k}{\rho }}\left( M+2k+\left( 2k-\beta \right) \left(
j+1\right) \right) ^{\frac{\left( 2k-\beta \right) \left( j+1\right) }{\rho }%
}}\ \left( e\rho \tau +\varepsilon \right) ^{\frac{M+2k+\left( 2k-\beta
\right) \left( j+1\right) }{\rho }}.
\end{equation*}%
It follows that%
\begin{equation*}
G_{M}^{\left( j\right) }\leq \frac{\sqrt{2}A_{\varepsilon }}{\sqrt{\omega
_{d-1}}}\frac{\left( M+1\right) ^{\frac{d-1}{2}}}{\left( M+2k\right) ^{\frac{%
M+2k}{\rho }}}\left( e\rho \tau +\varepsilon \right) ^{\frac{M+2k}{\rho }%
}\left( \frac{\left( M+2k\left( j+2\right) +D\right) ^{\alpha }C\widetilde{D}%
\left( e\rho \tau +\varepsilon \right) ^{\frac{2k-\beta }{\rho }}}{\left(
M+2k+\left( 2k-\beta \right) \left( j+1\right) \right) ^{\frac{\left(
2k-\beta \right) }{\rho }}}\ \right) ^{j+1}.
\end{equation*}%
If $\alpha =0$ then there exists a $j_{0}\in \mathbb{N}$ such that for all $%
j\geq j_{0}$ and for all $M\geq 0$, 
\begin{equation*}
C\widetilde{D}\left( \frac{e\rho \tau +\varepsilon }{M+2k+\left( 2k-\beta
\right) \left( j+1\right) }\right) ^{\frac{\left( 2k-\beta \right) }{\rho }%
}\leq \frac{1}{2}.
\end{equation*}%
Thus there exists a constant $B_{j_{0}}$ such that 
\begin{equation*}
\left\vert G_{M}\right\vert \leq \sum_{j=0}^{\infty }G_{M}^{\left( j\right)
}\leq B_{j_{0}}\frac{\sqrt{2}A_{\varepsilon }}{\sqrt{\omega _{d-1}}}\frac{%
\left( M+1\right) ^{\frac{d-1}{2}}}{\left( M+2k\right) ^{\frac{M+2k}{\rho }}}%
\left( e\rho \tau +\varepsilon \right) ^{\frac{M+2k}{\rho }}.
\end{equation*}

From this we see that 
\begin{equation*}
\lim_{M\rightarrow \infty }\sup \left( M\cdot \max_{\theta \in \mathbb{S}%
^{d-1}}\left\vert G_{M}\left( \theta \right) \right\vert ^{\frac{\rho }{M}%
}\right) \leq e\rho \tau +\varepsilon
\end{equation*}%
for every $\varepsilon >0.$ Now let $\varepsilon \rightarrow 0.$ Using the
remark after (\ref{LindPring}), this implies that $G\left( x\right) $ is a
well-defined entire function of order $\leq \rho ,$ and if $G$ has order $%
\rho $ it follows that the type of $G$ is smaller or equal to $\tau .$

Now assume that $\alpha =\frac{2k-\beta }{\rho } > 0$. Then 
\begin{equation*}
\frac{\left( M+2k\left( j+2\right) +D\right) ^{\alpha \left( j+1\right) }}{%
\left( M+2k+\left( 2k-\beta \right) \left( j+1\right) \right) ^{\frac{\left(
2k-\beta \right) \left( j+1\right) }{\rho }}}=\frac{\left( 2k\right) ^{\frac{%
2k-\beta }{\rho }}}{\left( 2k-\beta \right) ^{\frac{2k-\beta }{\rho }}}\frac{%
\left( \frac{M+2k+D}{2k}+j+1\right) ^{\frac{2k-\beta }{\rho }}}{\left( \frac{%
M+2k}{2k-\beta }+j+1\right) ^{\frac{2k-\beta }{\rho }}}.
\end{equation*}%
We now choose $\delta >0$ and $\varepsilon >0$ such that%
\begin{equation*}
\ \frac{\left( 2k\right) ^{\frac{2k-\beta }{\rho }}}{\left( 2k-\beta \right)
^{\frac{2k-\beta }{\rho }}}\ C\widetilde{D}\left( 1+\delta \right) \left(
e\rho \tau +\varepsilon \right) ^{\frac{2k-\beta }{\rho }}<1.
\end{equation*}%
Given fixed $M$ and $\delta >0$ there exists $j_{M}$ such that for all $%
j\geq j_{M}$ we have 
\begin{equation*}
\frac{\frac{M+2k+D}{2k}+j+1}{\frac{M+2k}{2k-\beta }+j+1}\leq 1+\delta
\end{equation*}%
It follows that%
\begin{equation*}
G_{M}^{\left( j\right) }\leq \frac{\sqrt{2}A_{\varepsilon }}{\sqrt{\omega
_{d-1}}}\frac{\left( M+1\right) ^{\frac{d-1}{2}}}{\left( M+2k\right) ^{\frac{%
M+2k}{\rho }}}\left( e\rho \tau +\varepsilon \right) ^{\frac{M+2k}{\rho }%
}\left( \ \frac{\left( 2k\right) ^{\frac{2k-\beta }{\rho }}}{\left( 2k-\beta
\right) ^{\frac{2k-\beta }{\rho }}}\left( 1+\delta \right) C\widetilde{D}%
\left( e\rho \tau +\varepsilon \right) ^{\frac{\left( 2k-\beta \right) }{%
\rho }}\right) ^{j+1}.
\end{equation*}%
Thus the series $\sum_{j=0}^{\infty }G_{M}^{\left( j\right) }$ converges.
Next we want to estimate $G_{M}$ for large $M.$ Take $M_{0}$ so large such
that for all $M\geq M_{0}$ and for all $j\geq 0$ 
\begin{equation*}
\frac{\frac{M+2k+D}{2k}+j+1}{\frac{M+2k}{2k-\beta }+j+1}\leq 1+\delta .
\end{equation*}%
Then we have for all $M\geq M_{0}$ that 
\begin{equation*}
|G_{M}|\leq \sum_{j=0}^{\infty }G_{M}^{\left( j\right) }\leq \frac{\sqrt{2}%
A_{\varepsilon }}{\sqrt{\omega _{d-1}}}\frac{\left( M+1\right) ^{\frac{d-1}{2%
}}}{\left( M+2k\right) ^{\frac{M+2k}{\rho }}}\left( e\rho \tau +\varepsilon
\right) ^{\frac{M+2k}{\rho }}\cdot \Gamma
\end{equation*}%
where 
\begin{equation*}
\Gamma =\sum_{j=0}^{\infty }\left( \ \frac{\left( 2k\right) ^{\frac{2k-\beta 
}{\rho }}}{\left( 2k-\beta \right) ^{\frac{2k-\beta }{\rho }}}\left(
1+\delta \right) C\widetilde{D}\left( e\rho \tau +\varepsilon \right) ^{%
\frac{2k-\beta }{\rho }}\right) ^{j+1}<\infty .
\end{equation*}%
Using the remark after (\ref{LindPring}), this implies that that $G\left(
x\right) $ is a well-defined entire function of order $\leq \rho $.
\end{proof}

\section{ Estimates for the norm of $T\left( f\right) $}

The following result reduces the question about a norm estimate of $T\left(
f\right) $ to a question about an integral inequality for homogeneous
polynomials:

\begin{proposition}
\label{PropT}Let $P_{2k}$ be a homogeneous polynomial of degree $2k > 0$.
Suppose that for every $m\in \mathbb{N}_{0}$, there is a constant $C_{m}>0$
such that 
\begin{equation*}
\left\langle P_{2k}f_{m},f_{m}\right\rangle _{L^2(\mathbb{S}^{d-1})}\geq
C_{m}\left\langle f_{m},f_{m}\right\rangle _{L^2(\mathbb{S}^{d-1})}
\end{equation*}
whenever $f_{m}$ is a homogeneous polynomial of degree $m.$ Then $\left(
P_{2k},\Delta ^{k}\right) $ is a Fischer pair for $\mathcal{P}\left( \mathbb{%
R}^{d}\right) $ and 
\begin{equation*}
\left\Vert T_{P_{2k}}\left( f_{m}\right) \right\Vert _{L^2(\mathbb{S}%
^{d-1})}\leq \frac{1}{C_{m-2k}}\left\Vert f_{m}\right\Vert _{L^2(\mathbb{S}%
^{d-1})}.
\end{equation*}
\end{proposition}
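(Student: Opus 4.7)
The plan is to prove the decomposition homogeneous-degree by homogeneous-degree, since both $M_{P_{2k}}$ and $\Delta^{k}$ respect the grading on $\mathcal{P}(\mathbb{R}^{d}) = \bigoplus_{m\ge 0}\mathcal{P}_{m}(\mathbb{R}^{d})$. For each $m$ I would seek
\[
\mathcal{P}_{m} \;=\; P_{2k}\cdot \mathcal{P}_{m-2k} \;\oplus\; \bigl(\ker \Delta^{k}\cap \mathcal{P}_{m}\bigr),
\]
where $\mathcal{P}_{m-2k}$ is interpreted as $\{0\}$ for $m<2k$; in that small-$m$ range $\Delta^{k}$ already annihilates $\mathcal{P}_{m}$, so $T_{P_{2k}}(f_{m})=0$ and both the Fischer decomposition and the norm bound hold trivially. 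For $m\ge 2k$, the classical Gauss decomposition $\mathcal{P}_{n} = \bigoplus_{i\ge 0}|x|^{2i}\mathcal{H}_{n-2i}$, where $\mathcal{H}_{j}$ is the space of harmonic polynomials of degree $j$, gives the dimension identity $\dim \ker \Delta^{k}|_{\mathcal{P}_{m}} = \dim \mathcal{P}_{m} - \dim \mathcal{P}_{m-2k}$. Hence it would suffice to show (i) that $M_{P_{2k}}\colon \mathcal{P}_{m-2k}\to \mathcal{P}_{m}$ is injective, and (ii) that $P_{2k}\cdot \mathcal{P}_{m-2k}\cap \ker \Delta^{k}|_{\mathcal{P}_{m}} = \{0\}$. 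Summing homogeneous components then transfers the Fischer pair property to all of $\mathcal{P}(\mathbb{R}^{d})$.

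Both (i) and (ii) would follow from a single orthogonality observation on $\mathbb{S}^{d-1}$. Suppose $q\in \mathcal{P}_{m-2k}$ and $h:=P_{2k}q$ lies in $\ker \Delta^{k}|_{\mathcal{P}_{m}}$ (taking $h=0$ covers (i)). Using the identity $\ker \Delta^{k}|_{\mathcal{P}_{m}} = \bigoplus_{j=0}^{k-1}|x|^{2j}\mathcal{H}_{m-2j}$ together with the Gauss decomposition of $q$, the restriction $h|_{\mathbb{S}^{d-1}}$ is a sum of spherical harmonics of degrees in $\{m-2k+2,\,m-2k+4,\,\dots,\,m\}$, whereas $q|_{\mathbb{S}^{d-1}}$ is a sum of spherical harmonics of degrees at most $m-2k$. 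By the standard $L^{2}(\mathbb{S}^{d-1})$-orthogonality of spherical harmonics of distinct degrees, $\langle h,q\rangle_{L^{2}(\mathbb{S}^{d-1})}=0$, and therefore $\langle P_{2k}q,q\rangle_{L^{2}(\mathbb{S}^{d-1})}=0$. The hypothesis then forces $C_{m-2k}\,\|q\|_{L^{2}(\mathbb{S}^{d-1})}^{2}\le 0$, so $q\equiv 0$ on the sphere and, by homogeneity, $q=0$ in $\mathcal{P}(\mathbb{R}^{d})$.

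For the norm bound I would recycle the same orthogonality. Set $q_{m-2k}:=T_{P_{2k}}(f_{m})$ so that $f_{m}=P_{2k}q_{m-2k}+h_{m}$ with $\Delta^{k}h_{m}=0$. Taking the $L^{2}(\mathbb{S}^{d-1})$-inner product of both sides with $q_{m-2k}$ and using the orthogonality of the previous paragraph to eliminate $\langle h_{m},q_{m-2k}\rangle$ yields
\[
\langle f_{m},q_{m-2k}\rangle_{L^{2}(\mathbb{S}^{d-1})} \;=\; \langle P_{2k}q_{m-2k},q_{m-2k}\rangle_{L^{2}(\mathbb{S}^{d-1})} \;\ge\; C_{m-2k}\,\|q_{m-2k}\|_{L^{2}(\mathbb{S}^{d-1})}^{2}.
\]
Cauchy--Schwarz applied to the left-hand side then gives the stated estimate $\|T_{P_{2k}}(f_{m})\|_{L^{2}(\mathbb{S}^{d-1})}\le \|f_{m}\|_{L^{2}(\mathbb{S}^{d-1})}/C_{m-2k}$ after division by $\|q_{m-2k}\|$. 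The one step requiring care is the clean verification that the degree ranges appearing in the Gauss decompositions of $h$ and $q$ are disjoint on $\mathbb{S}^{d-1}$; once this degree-separation is in hand, the hypothesis does all the remaining work and the entire proposition follows by a routine calculation.
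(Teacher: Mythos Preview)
Your argument is correct and follows the same skeleton as the paper's proof: both establish the orthogonality $\langle h,q\rangle_{L^{2}(\mathbb{S}^{d-1})}=0$ whenever $\Delta^{k}h=0$ and $q$ is homogeneous of degree $m-2k$, feed this into the hypothesis to force $q=0$ (yielding the Fischer pair), and then combine the same orthogonality with Cauchy--Schwarz to get the norm bound. The difference is one of packaging. The paper imports the orthogonality as a black box (Theorem~2 of \cite{Rend08}) and reduces the Fischer-pair statement to injectivity of $q\mapsto\Delta^{k}(P_{2k}q)$ via Theorem~37 of \cite{Rend08}. You instead give a self-contained proof: the Gauss/Almansi identification $\ker\Delta^{k}|_{\mathcal{P}_{m}}=\bigoplus_{j=0}^{k-1}|x|^{2j}\mathcal{H}_{m-2j}$ yields both the degree-separation on the sphere (hence the orthogonality) and the dimension identity $\dim\ker\Delta^{k}|_{\mathcal{P}_{m}}=\dim\mathcal{P}_{m}-\dim\mathcal{P}_{m-2k}$ that replaces the cited Fischer-pair criterion. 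Your route is more elementary and avoids external references; the paper's is shorter because those references are already in hand. The ``degree-separation'' step you flag as requiring care is exactly the content of the cited Theorem~2, and your Gauss-decomposition justification of it is correct.
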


\begin{proof}
First we show that $\left( P_{2k},\Delta ^{k}\right) $ is a Fischer pair for 
$\mathcal{P}\left( \mathbb{R}^{d}\right)$. By \cite[Theorem 37]{Rend08}, it
suffices to prove the injectivity of $q\longmapsto \Delta ^{k}\left(
P_{2k}q\right) .$ If $\Delta ^{k}\left( P_{2k}q_{m}\right) =0$ for some
homogeneous polynomial $q_{m}$ of degree $m$ then%
\begin{equation*}
\left\langle P_{2k}q_{m},f\right\rangle _{L^2(\mathbb{S}^{d-1})}=0
\end{equation*}%
for all polynomials $f$ with $\deg f+2k-2<m+2k,$ see Theorem 2 in \cite%
{Rend08}. Taking $f=q_{m}$, we obtain 
\begin{equation*}
0=\left\langle P_{2k}q_{m},q_{m}\right\rangle _{L^2(\mathbb{S}^{d-1})}\geq
C_{m}\left\langle q_{m},q_{m}\right\rangle _{L^2(\mathbb{S}^{d-1})}\geq 0.
\end{equation*}%
Thus $q_{m}=0$ and the Fischer operator is injective.

Next, let $f_{m}$ be a homogeneous polynomial of degree $m$. We write 
\begin{equation*}
f_{m}=P_{2k}\cdot T_{P_{2k}}\left( f_{m}\right) +h_{m},
\end{equation*}%
where $\Delta ^{k}h_{m}=0.$ If $T_{P_{2k}}\left( f_{m}\right) =0$, there is
nothing to prove, so assume otherwise. Then $m-2k\geq 0$, and $%
T_{P_{2k}}\left( f_{m}\right) $ is a homogeneous polynomial of degree $m-2k$%
, so $h_m$ is either the zero polynomial or a homogeneous polynomial of
degree $m$. Using $\Delta ^{k}h_{m}=0$ and Theorem 2 of \cite{Rend08}, we
conclude that $\left\langle h_{m}, T_{P_{2k}} \left( f_{m}\right)
\right\rangle _{L^2(\mathbb{S}^{d-1})}=0.$ Thus 
\begin{equation*}
\left\langle f_{m}, T_{P_{2k}} \left( f_{m}\right)\right\rangle _{L^2(%
\mathbb{S}^{d-1})} =\left\langle P_{2k} T_{P_{2k}} \left( f_{m}\right)
,T_{P_{2k}} \left( f_{m}\right)\right\rangle _{L^2(\mathbb{S}^{d-1})}
\end{equation*}
\begin{equation*}
\geq C_{m-2k}\left\langle T_{P_{2k}} \left( f_{m}\right) , T_{P_{2k}} \left(
f_{m}\right)\right\rangle _{L^2(\mathbb{S}^{d-1})}.
\end{equation*}
By the Cauchy-Schwarz inequality, 
\begin{equation*}
C_{m-2k}\left\Vert T_{P_{2k}} \left( f_{m}\right) \right\Vert _{L^2(\mathbb{S%
}^{d-1})}^{2}\leq \left\Vert f_{m}\right\Vert _{L^2(\mathbb{S}%
^{d-1})}\left\Vert T_{P_{2k}} \left( f_{m}\right)\right\Vert _{L^2(\mathbb{S}%
^{d-1})}.
\end{equation*}%
After dividing both sides of the inequality by $\left\Vert T_{P_{2k}} \left(
f_{m}\right) \right\Vert _{L^2(\mathbb{S}^{d-1})},$ we obtain the result.
\end{proof}

Next we note that it is enough to assume $\left\langle
P_{2k}f_{m},f_{m}\right\rangle _{L^2(\mathbb{S}^{d-1})}\geq
C_{m}\left\langle f_{m},f_{m}\right\rangle _{L^2(\mathbb{S}^{d-1})}$ for $m$
even.

\begin{lemma}
\label{Lemevenodd} Let $P_{2k}$ be a homogeneous polynomial of degree $2k >
0 $. Suppose that for each $m\in \mathbb{N}_{0}$ there exists a constant $%
C_{2m}>0 $ such that 
\begin{equation*}
\left\langle P_{2k}f_{2m},f_{2m}\right\rangle _{L^2(\mathbb{S}^{d-1})}\geq
C_{2m}\left\langle f_{2m},f_{2m}\right\rangle _{L^2(\mathbb{S}^{d-1})}
\end{equation*}%
for all homogeneous polynomials $f_{2m}$ of degree $2m$, where $m \ge 0$.
Then 
\begin{equation*}
\left\langle P_{2k}f_{2m+1},f_{2m+1}\right\rangle _{L^2(\mathbb{S}%
^{d-1})}\geq C_{2m+2}\left\langle f_{2m+1},f_{2m+1}\right\rangle _{L^2(%
\mathbb{S}^{d-1})}
\end{equation*}%
for all homogeneous polynomials $f_{2m+1}$ of degree $2m+1.$
\end{lemma}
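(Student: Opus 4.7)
The plan is to reduce the odd-degree case to the even-degree case by multiplying a given odd-degree homogeneous polynomial by the coordinate functions $x_1, \dots, x_d$, then summing and exploiting the identity $x_1^2 + \cdots + x_d^2 = 1$ on $\mathbb{S}^{d-1}$.

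Concretely, let $f_{2m+1}$ be an arbitrary homogeneous polynomial of degree $2m+1$. For each $j \in \{1, \dots, d\}$, the polynomial $x_j f_{2m+1}$ is homogeneous of degree $2(m+1)$, so the hypothesis applied with $2m$ replaced by $2m+2$ yields
\begin{equation*}
\langle P_{2k}\, x_j f_{2m+1},\, x_j f_{2m+1}\rangle_{L^2(\mathbb{S}^{d-1})} \geq C_{2m+2}\, \langle x_j f_{2m+1},\, x_j f_{2m+1}\rangle_{L^2(\mathbb{S}^{d-1})}.
\end{equation*}
Since $x_j$ is real, $\overline{x_j f_{2m+1}} = x_j \overline{f_{2m+1}}$, so the integrand on the left equals $x_j^2\, P_{2k}|f_{2m+1}|^2$ and the integrand on the right equals $x_j^2 |f_{2m+1}|^2$.

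Summing the inequality over $j=1,\dots,d$ and using that $\sum_{j=1}^d x_j^2 = 1$ on $\mathbb{S}^{d-1}$, one obtains
\begin{equation*}
\int_{\mathbb{S}^{d-1}} P_{2k}(\theta)|f_{2m+1}(\theta)|^2\, d\theta \ \geq\ C_{2m+2} \int_{\mathbb{S}^{d-1}} |f_{2m+1}(\theta)|^2\, d\theta,
\end{equation*}
which is exactly the desired inequality.

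There is no real obstacle here: the only thing to verify is that the homogeneity degree is even after multiplication by $x_j$, which it is, and that the weight $\sum_j x_j^2$ collapses to $1$ on the sphere, which it does. This is why the constant on the right is $C_{2m+2}$ rather than $C_{2m}$: the induction step is to degree $2(m+1)$, not to degree $2m$.
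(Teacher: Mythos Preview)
Your proof is correct and is essentially identical to the paper's own argument: both multiply $f_{2m+1}$ by each coordinate $x_j$ to obtain homogeneous polynomials of degree $2m+2$, apply the hypothesis with constant $C_{2m+2}$, and then sum over $j$ using $\sum_{j=1}^d x_j^2 = 1$ on $\mathbb{S}^{d-1}$.
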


\begin{proof}
Let $f_{2m+1}$ be a homogeneous polynomial of degree $2m+1.$ Define $%
F_{j}\left( x\right) :=x_{j}f_{2m+1}\left( x\right) $ for $x=\left(
x_{1},\dots ,x_{d}\right) \in \mathbb{R}^{d}$ and $j=1,\dots ,d.$ Recall
that $\left\vert x\right\vert ^{2}=x_{1}^{2}+\dots +x_{d}^{2}$ and $%
\left\vert \theta \right\vert ^{2}=1$ for $\theta \in \mathbb{S}^{d-1}.$
Then 
\begin{equation*}
\left\langle P_{2k}f_{2m+1},f_{2m+1}\right\rangle _{L^2(\mathbb{S}^{d-1})}
=\left\langle P_{2k}\left\vert x\right\vert
^{2}f_{2m+1},f_{2m+1}\right\rangle _{L^2(\mathbb{S}^{d-1})}
=\sum_{j=1}^{d}\left\langle P_{2k}F_{j},F_{j}\right\rangle _{L^2(\mathbb{S}%
^{d-1})}.
\end{equation*}%
Since $F_{j}$ is a homogeneous polynomial of degree $2m+2$ our assumption
implies that 
\begin{equation*}
\left\langle P_{2k}f_{2m+1},f_{2m+1}\right\rangle _{L^2(\mathbb{S}%
^{d-1})}\geq \sum_{j=1}^{d}C_{2m+2}\left\langle F_{j},F_{j}\right\rangle
_{L^2(\mathbb{S}^{d-1})}=C_{2m+2}\left\langle f_{2m+1},f_{2m+1}\right\rangle
_{L^2(\mathbb{S}^{d-1})}.
\end{equation*}
\end{proof}

Next we use the orthogonality relations for spherical harmonics in order to
derive bounds for homogeneous polynomials of even degree.

\begin{lemma}
\label{LemReduceHarmonic}Let $P_{2k}$ be a homogeneous polynomial of degree $%
2k$ and let 
\begin{equation*}
H_{2m}:=\left\{ \sum_{l=0}^{m}h_{2l}:\text{ }h_{2l}\text{ harmonic
homogeneous polynomial of degree }2l\right\}.
\end{equation*}%
Suppose that for each $m\in \mathbb{N}_{0}$ there exists a constant $%
C_{2m}>0 $ such that for all $h\in H_{2m}$, 
\begin{equation*}
\left\langle P_{2k}h,h\right\rangle _{L^2(\mathbb{S}^{d-1})}\geq
C_{2m}\left\langle h,h\right\rangle _{L^2(\mathbb{S}^{d-1})}.
\end{equation*}
Then%
\begin{equation*}
\left\langle P_{2k}f_{2m},f_{2m}\right\rangle _{L^2(\mathbb{S}^{d-1})}\geq
C_{2m}\left\langle f_{2m},f_{2m}\right\rangle _{L^2(\mathbb{S}^{d-1})}.
\end{equation*}%
for all homogeneous polynomials $f_{2m}$ of degree $2m.$
\end{lemma}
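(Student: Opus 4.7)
The plan is to exploit the classical Gauss decomposition of a homogeneous polynomial into harmonic components and the fact that the multiplier $|x|^{2}$ is trivial on the unit sphere, so that any homogeneous polynomial of degree $2m$ becomes, upon restriction to $\mathbb{S}^{d-1}$, an element of $H_{2m}$. Once this identification is made, the hypothesis applied to that element gives the desired inequality directly, since the inner products in the statement depend only on the restrictions of the relevant polynomials to $\mathbb{S}^{d-1}$.

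More concretely, first I would invoke the standard fact that every homogeneous polynomial $f_{2m}$ of degree $2m$ admits a unique decomposition
\begin{equation*}
f_{2m}(x)=\sum_{l=0}^{m}|x|^{2l}\,h_{2m-2l}(x),
\end{equation*}
where each $h_{2m-2l}$ is a harmonic homogeneous polynomial of degree $2m-2l$. Setting
\begin{equation*}
h(x):=\sum_{l=0}^{m}h_{2m-2l}(x),
\end{equation*}
we observe that $h\in H_{2m}$ (the admissible degrees $2m,2m-2,\dots,2,0$ are all even and lie in the prescribed range), and that $f_{2m}(\theta)=h(\theta)$ for every $\theta\in\mathbb{S}^{d-1}$, because $|\theta|^{2}=1$.

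Second, since the integrals defining the spherical inner product only see the values of the integrand on $\mathbb{S}^{d-1}$, the identification $f_{2m}|_{\mathbb{S}^{d-1}}=h|_{\mathbb{S}^{d-1}}$ immediately yields
\begin{equation*}
\langle P_{2k}f_{2m},f_{2m}\rangle_{L^{2}(\mathbb{S}^{d-1})}=\langle P_{2k}h,h\rangle_{L^{2}(\mathbb{S}^{d-1})}\quad\text{and}\quad\langle f_{2m},f_{2m}\rangle_{L^{2}(\mathbb{S}^{d-1})}=\langle h,h\rangle_{L^{2}(\mathbb{S}^{d-1})}.
\end{equation*}
Applying the hypothesis to $h\in H_{2m}$ then gives
\begin{equation*}
\langle P_{2k}f_{2m},f_{2m}\rangle_{L^{2}(\mathbb{S}^{d-1})}\geq C_{2m}\langle h,h\rangle_{L^{2}(\mathbb{S}^{d-1})}=C_{2m}\langle f_{2m},f_{2m}\rangle_{L^{2}(\mathbb{S}^{d-1})},
\end{equation*}
which is precisely the desired inequality.

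There is really no serious obstacle here; the only substantive ingredient is the Gauss/spherical-harmonic decomposition, which can be cited as a standard result. The one point to verify carefully is that the degrees appearing in the harmonic decomposition match exactly the set of degrees allowed in the definition of $H_{2m}$, which is immediate since $2m$ is even and the decomposition produces harmonic components of degrees $2m-2l$ for $l=0,\dots,m$.
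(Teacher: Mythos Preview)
Your proof is correct and follows essentially the same route as the paper: Gauss decomposition of $f_{2m}$ into harmonic components, formation of the associated element $h\in H_{2m}$, and application of the hypothesis. The only difference is cosmetic: the paper invokes the orthogonality relations for spherical harmonics to show that $\langle f_{2m},f_{2m}\rangle_{L^2(\mathbb{S}^{d-1})}=\langle h,h\rangle_{L^2(\mathbb{S}^{d-1})}$ and $\langle P_{2k}f_{2m},f_{2m}\rangle_{L^2(\mathbb{S}^{d-1})}=\langle P_{2k}h,h\rangle_{L^2(\mathbb{S}^{d-1})}$, whereas you observe more directly that $f_{2m}$ and $h$ agree pointwise on $\mathbb{S}^{d-1}$, which yields the same equalities with less machinery.
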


\begin{proof}
Let $f_{2m}$ be a homogeneous polynomial of degree $2m.$ By the Gauss
decomposition (see Theorem 5.5 in \cite{ABR92} or Theorem 5.7 in the 2001
edition) there exist homogeneous harmonic polynomials $h_{2l}$ of degree $2l$
for $l=0,\dots ,m$, such that 
\begin{equation}
f_{2m} (x) =\sum_{l=0}^{m}h_{2l}(x) \left\vert x\right\vert ^{2m-2l}\text{.}
\label{eqAlmansi1}
\end{equation}%
Define 
\begin{equation*}
h=\sum_{l=0}^{m}h_{2l}.
\end{equation*}%
Since the harmonic polynomials $h_{2l}$ have different degrees for different
values of $l$, the orthogonality relations for spherical harmonics yield 
\begin{equation*}
\left\langle f_{2m},f_{2m}\right\rangle _{L^2(\mathbb{S}^{d-1})}=%
\sum_{l=0}^{m}\left\langle h_{2l},h_{2l}\right\rangle _{L^2(\mathbb{S}%
^{d-1})}=\left\langle h,h\right\rangle _{L^2(\mathbb{S}^{d-1})}.
\end{equation*}%
Thus 
\begin{equation*}
\left\langle P_{2k}f_{2m},f_{2m}\right\rangle _{\mathbb{S}%
^{d-1}}=\sum_{l_{1}=0}^{m}\sum_{l_{2}=0}^{m}\left\langle
P_{2k}h_{2l_{1}},h_{2l_{2}}\right\rangle _{L^2(\mathbb{S}^{d-1})}=\left%
\langle P_{2k}h,h\right\rangle _{L^2(\mathbb{S}^{d-1})}
\end{equation*}
\begin{equation*}
\ge C_{2m}\left\langle h,h\right\rangle _{L^2(\mathbb{S}^{d-1})} =
C_{2m}\left\langle f_{2m},f_{2m}\right\rangle _{L^2(\mathbb{S}^{d-1})}.
\end{equation*}
\end{proof}

\section{The integral inequality for dimension $2$ and $P_2(x_1, x_2) =
x_2^2 $}

First we include two auxiliary results.

\begin{lemma}
\label{Lemsin}For all $n\geq 2$ one has the estimate 
\begin{equation*}
\sin \frac{\pi }{n}\geq \frac{\pi }{n+2}.
\end{equation*}
\end{lemma}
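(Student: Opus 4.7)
The plan is to prove the equivalent inequality $(n+2)\sin(\pi/n) \geq \pi$ by combining the elementary Taylor bound $\sin x \geq x - x^{3}/6$ (valid for all $x \geq 0$) with a short arithmetic check. Since $n \geq 2$ gives $\pi/n \in (0,\pi/2]$, this bound yields
\[
\sin\frac{\pi}{n} \geq \frac{\pi}{n} - \frac{\pi^{3}}{6 n^{3}},
\]
so it is enough to verify
\[
(n+2)\Bigl(\frac{\pi}{n} - \frac{\pi^{3}}{6n^{3}}\Bigr) \geq \pi,
\]
which, after subtracting $\pi$, multiplying through by $6n^{3}/\pi$, and rearranging, is equivalent to
\[
q(n) := 12 n^{2} - \pi^{2} n - 2\pi^{2} \geq 0.
\]

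I would then finish by noting that $q'(n) = 24 n - \pi^{2} > 0$ for all $n \geq 1$ (since $24 > \pi^{2}$), so $q$ is increasing on $[2,\infty)$. Thus it suffices to check $q(2) \geq 0$, and indeed $q(2) = 48 - 4\pi^{2} > 48 - 40 = 8 > 0$, using $\pi^{2} < 10$.

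The only piece that needs a brief justification is the Taylor bound itself. I would dispatch it in one line by setting $f(x) := \sin x - x + x^{3}/6$ and observing that $f(0) = f'(0) = f''(0) = 0$ while $f'''(x) = 1 - \cos x \geq 0$, so integrating three times gives $f \geq 0$ on $[0,\infty)$. No step here presents a real obstacle; the only mild subtlety is making sure the cubic correction in the Taylor bound is large enough not to spoil the inequality at the endpoint $n = 2$, which is precisely what the numerical verification $q(2) > 0$ confirms.
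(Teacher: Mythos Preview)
Your proof is correct. The Taylor bound $\sin x \ge x - x^3/6$ is established cleanly, the algebraic reduction to $q(n)=12n^2-\pi^2 n-2\pi^2\ge 0$ is accurate, and the endpoint check $q(2)=48-4\pi^2>0$ together with $q'(n)>0$ on $[1,\infty)$ finishes the argument without any gaps.

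Your route is genuinely different from the paper's. The paper rewrites the claim as $\dfrac{x}{\sin x}\le 1+\dfrac{2}{\pi}x$ for $0<x\le \pi/2$ and deduces this from the sharper inequality $\dfrac{x}{\sin x}\le 1+\dfrac{x}{2}\tan\dfrac{x}{2}$, which in turn is obtained by comparing the Bernoulli-number series expansions of $x/\sin x$ and of $\tan x$ (using $4^n-2\le 4^n-1$ term by term). Your argument avoids series manipulations and special-function identities entirely, relying only on a one-line Taylor remainder bound and an explicit quadratic; this is shorter, more elementary, and fully self-contained. The paper's approach, on the other hand, yields the auxiliary inequality $\dfrac{x}{\sin x}\le 1+\dfrac{x}{2}\tan\dfrac{x}{2}$ on the whole interval $(-\pi,\pi)$, which is of some independent interest but is more than what the lemma requires.
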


\begin{proof}
By taking $x=\pi /n$ it suffices to show that for all $0<x\leq \pi /2$ 
\begin{equation*}
\sin x\geq \frac{1}{\frac{1}{x}+\frac{2}{\pi }}=\frac{x}{1+\frac{2}{\pi }x},%
\text{ i. e. }\frac{x}{\sin x}\leq 1+\frac{2}{\pi }x.
\end{equation*}%
This inequality follows from the general inequality 
\begin{equation}
\frac{x}{\sin x}\leq 1+\frac{x}{2}\tan \frac{x}{2}\text{ for all }\left\vert
x\right\vert <\pi  \label{ineqbern}
\end{equation}%
and the fact that $\tan \frac{x}{2}\leq \tan \frac{\pi }{4}=1$ for $0<x\leq
\pi /2.$ Inequality (\ref{ineqbern}) follows from the representation (see 
\cite[p. 159]{Remm84}) 
\begin{equation*}
\frac{x}{\sin x}=1+x\sum_{n=1}^{\infty }\left( -1\right) ^{n-1}\frac{4^{n}-2%
}{\left( 2n\right) !}B_{2n}x^{2n-1}
\end{equation*}%
where $B_{2n}$ are the Bernoulli numbers. Using the trivial estimate $%
4^{n}-2\leq 4^{n}-1$ and the positivity of $\left( -1\right) ^{n-1}B_{2n}$
we obtain 
\begin{equation*}
\frac{x}{\sin x}\leq 1+x\sum_{n=1}^{\infty }\left( -1\right) ^{n-1}\frac{
4^{n}-1}{\left( 2n\right) !}B_{2n}x^{2n-1}=1+\frac{x}{2}\tan \frac{x}{2}
\end{equation*}

since $\tan x=\sum_{n=1}^{\infty }\left( -1\right) ^{n-1}\frac{4^{n}-1}{
\left( 2n\right) !}2^{2n}B_{2n}x^{2n-1},$ see \cite[p. 158]{Remm84}.
\end{proof}

The following result is well known; the proof is included for the reader's
convenience:

\begin{proposition}
\label{PropCheb}Let $P_{n}\left( \lambda \right) =\det \left( A_{n}-\lambda
I \right) $ where $A_{n}$ is the $n\times n$-matrix 
\begin{equation*}
A_{n}=\left( 
\begin{array}{ccccc}
0 & \sqrt{2} &  &  &  \\ 
\sqrt{2} & 0 & 1 &  &  \\ 
& 1 & 0 & \ddots &  \\ 
&  & \ddots & \ddots & 1 \\ 
&  &  & 1 & 0%
\end{array}
\right)
\end{equation*}
Then $P_{n}\left( \lambda \right) =2T_{n}\left( -\frac{\lambda }{2}\right) $
where $T_{n}$ is the Chebyshev polynomial of degree $n$.
\end{proposition}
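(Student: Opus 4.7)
My plan is to prove this by induction on $n$, verifying the base cases $n=1,2$ directly and then establishing a three-term recurrence for $P_n(\lambda)$ that matches the standard Chebyshev recurrence under the substitution $\lambda \mapsto -2x$.

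First I would compute the base cases by direct evaluation: $P_1(\lambda)=-\lambda$, which equals $2T_1(-\lambda/2)=-\lambda$; and $P_2(\lambda)=\lambda^2-2$, which equals $2T_2(-\lambda/2)=2\bigl(2(-\lambda/2)^2-1\bigr)=\lambda^2-2$. These two cases are important because the $\sqrt{2}$ entries in the $(1,2)$ and $(2,1)$ positions make the structure of $A_n-\lambda I$ different near the top-left corner; we only get a uniform tridiagonal structure ``with $1$'s'' near the bottom-right corner once $n\geq 3$.

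Next, for $n\geq 3$ I would derive the recurrence
\begin{equation*}
P_n(\lambda)=-\lambda\, P_{n-1}(\lambda)-P_{n-2}(\lambda)
\end{equation*}
by Laplace expansion of $\det(A_n-\lambda I)$ along the last column. The last column of $A_n-\lambda I$ has only two nonzero entries: $1$ at position $(n-1,n)$ and $-\lambda$ at position $(n,n)$. The cofactor of the $(n,n)$ entry is $\det(A_{n-1}-\lambda I)=P_{n-1}(\lambda)$, and for the $(n-1,n)$ cofactor one deletes row $n-1$ and column $n$, then expands once more along the new last row (whose only nonzero entry is a $1$ coming from the original $(n,n-1)$-entry of $A_n$), producing $\det(A_{n-2}-\lambda I)=P_{n-2}(\lambda)$ with sign $(-1)^{(n-1)+n}\cdot(-1)^{(n-1)+(n-1)}=-1$. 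Summing gives the claimed recurrence. A minor but necessary check is that for $n=3$ the ``bottom-right'' submatrix used to identify $P_{n-1}$ is actually $A_2-\lambda I$ (which does contain the $\sqrt{2}$'s), and this is consistent with the definition of $P_{n-1}$.

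Finally, I would close the induction using the Chebyshev recurrence $T_n(x)=2xT_{n-1}(x)-T_{n-2}(x)$. Substituting $x=-\lambda/2$ and multiplying by $2$ gives
\begin{equation*}
2T_n(-\lambda/2)=-\lambda\cdot 2T_{n-1}(-\lambda/2)-2T_{n-2}(-\lambda/2),
\end{equation*}
so the two sequences $\{P_n(\lambda)\}$ and $\{2T_n(-\lambda/2)\}$ satisfy the same linear recurrence for $n\geq 3$ and agree for $n=1,2$. By induction they coincide for all $n\geq 1$. The main (mild) obstacle is just the careful sign bookkeeping in the Laplace expansion when peeling off two rows and columns, but nothing deeper than that is involved.
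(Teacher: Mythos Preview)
Your proof is correct and follows essentially the same approach as the paper: compute $P_1$ and $P_2$ directly, obtain the recurrence $P_n(\lambda)=-\lambda P_{n-1}(\lambda)-P_{n-2}(\lambda)$ by Laplace expansion along the last column, and match it to the Chebyshev recurrence under $x=-\lambda/2$. The only cosmetic difference is that the paper introduces $P_0(\lambda)=2$ to verify the recurrence also at $n=1$, whereas you simply take $n=1,2$ as base cases.
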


\begin{proof}
Note that $P_{1}\left( \lambda \right) =-\lambda $ and $P_{2}\left( \lambda
\right) =\lambda ^{2}-2.$ For $n\geq 2$ expansion of the determinant
according to the last column yields the following recurrence relation: $%
P_{n+1}\left( \lambda \right) =-\lambda P_{n}\left( \lambda \right)
-P_{n-1}\left( \lambda \right) .$ For $n=1$ the recurrence relation is still
valid if we define $P_{0}\left( \lambda \right) =2$ since 
\begin{equation*}
P_{2}\left( \lambda \right) =\lambda ^{2}-2\text{ and }-\lambda P_{1}\left(
\lambda \right) -P_{0}\left( \lambda \right) =\lambda ^{2}-2.
\end{equation*}%
Replace $\lambda $ by $-2x.$ Then $P_{n+1}\left( -2x\right) =2xP_{n}\left(
-2x\right) -P_{n-1}\left( -2x\right) $ for $n\geq 1.$ Define $T_{n}(x):=%
\frac{1}{2}P_{n}\left( -2x\right) ,$ then $T_{n+1}\left( x\right)
=2xT_{n}\left( x\right) -T_{n-1}\left( x\right) $ for $n\geq 1$ which is the
recurrence relation of the Chebyshev polynomials. Further $P_{0}\left(
\lambda \right) =2$ implies that $T_{0}\left( x\right) =1$; clearly $%
T_{1}\left( x\right) =\frac{1}{2}P_{1}\left( -2x\right) =x.$ Thus $T_{n}$
are the Chebyshev polynomials.
\end{proof}

We recall and then prove our second main theorem:

\begin{theorem}
\label{ThmMainest2}Let $d=2$. Then for all homogeneous polynomials $f_{m}$
of degree $m$ the following inequality holds: 
\begin{equation}
\left\langle x_{2}^{2}f_{m},f_{m}\right\rangle _{L^2(\mathbb{S}^{1})}\geq 
\frac{ \pi ^{2}}{4\left( m+4\right) ^{2}}\left\langle
f_{m},f_{m}\right\rangle_{L^2(\mathbb{S}^{1})}.  \label{eqmain2}
\end{equation}
\end{theorem}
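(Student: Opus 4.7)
The plan is to reduce, by successive applications of Lemmas \ref{Lemevenodd} and \ref{LemReduceHarmonic}, to a finite-dimensional tridiagonal eigenvalue problem on $\mathbb{S}^{1}$, and then extract the smallest eigenvalue using Proposition \ref{PropCheb} together with the elementary bound of Lemma \ref{Lemsin}.

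First, by Lemma \ref{Lemevenodd}, it suffices to handle the even-degree case $m=2n$. In fact I will aim for the slightly sharper constant $\pi^{2}/(4(2n+3)^{2})$ in that case, so that the odd-degree statement with constant $\pi^{2}/(4(m+4)^{2})$ follows from Lemma \ref{Lemevenodd} applied with $2n+2$ in place of $2n$ (yielding $\pi^{2}/(4(2n+5)^{2})=\pi^{2}/(4((2n+1)+4)^{2})$ for degree $2n+1$). Then, by Lemma \ref{LemReduceHarmonic}, it suffices to prove the inequality on $H_{2n}$, i.e.\ on sums $h$ of harmonic homogeneous polynomials of even degrees $0,2,\dots,2n$.

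Parameterizing $\mathbb{S}^{1}$ by $t\mapsto(\cos t,\sin t)$, each such $h$ restricts to a trigonometric polynomial $h(t)=\sum_{l=0}^{n}(a_{l}\cos 2lt+b_{l}\sin 2lt)$, and $x_{2}^{2}=\tfrac{1-\cos 2t}{2}$. Cross-terms between cosines and sines integrate to zero, so the quadratic form $\langle x_{2}^{2}h,h\rangle_{L^{2}(\mathbb{S}^{1})}$ decouples into a ``cosine block'' on $\mathrm{span}\{1,\cos 2t,\dots,\cos 2nt\}$ and a ``sine block'' on $\mathrm{span}\{\sin 2t,\dots,\sin 2nt\}$. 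In the $L^{2}(\mathbb{S}^{1})$-orthonormal basis $\{1/\sqrt{2\pi},\,\cos 2t/\sqrt{\pi},\dots,\cos 2nt/\sqrt{\pi}\}$ of the cosine block, a short computation using $\cos 2t\cdot\cos 2kt=\tfrac12(\cos 2(k-1)t+\cos 2(k+1)t)$ identifies the matrix of $2M_{\cos 2t}$ with precisely the matrix $A_{n+1}$ of Proposition \ref{PropCheb}; the $\sqrt{2}$ entries in the corner arise from the different $L^{2}$-normalizations of the constant function and the non-constant cosines. By Proposition \ref{PropCheb} its eigenvalues are $-2\cos((2k-1)\pi/(2n+2))$ for $k=1,\dots,n+1$, the largest being $2\cos(\pi/(2n+2))$; hence the smallest eigenvalue of $M_{\sin^{2}t}=I/2-M_{\cos 2t}/2$ on the cosine block equals $\sin^{2}(\pi/(4n+4))$. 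An analogous (easier) computation on the sine block produces the strictly larger lower bound $\sin^{2}(\pi/(2n+2))$, so the overall smallest eigenvalue is $\sin^{2}(\pi/(4n+4))$.

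Finally, applying Lemma \ref{Lemsin} with $n\mapsto 4n+4$ yields $\sin(\pi/(4n+4))\geq \pi/(4n+6)$, hence $\sin^{2}(\pi/(4n+4))\geq \pi^{2}/(4(2n+3)^{2})$, which is the sharpened even-degree bound. The main obstacle is the third step: identifying the matrix of $M_{\cos 2t}$, in the correctly $L^{2}$-normalized cosine basis, with the specific matrix $A_{n+1}$ of Proposition \ref{PropCheb} --- in particular carefully tracking the $\sqrt{2}$ coming from the non-uniform normalization at frequency zero --- so as to bring the Chebyshev polynomials into play and obtain the extremal eigenvalue in closed form.
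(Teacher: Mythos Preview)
Your proposal is correct and follows essentially the same route as the paper: reduction to even degree via Lemma~\ref{Lemevenodd}, then to $H_{2n}$ via Lemma~\ref{LemReduceHarmonic}, then the cosine/sine block decomposition on $\mathbb{S}^{1}$, identification of the cosine block of $2M_{\cos 2t}$ with $A_{n+1}$ from Proposition~\ref{PropCheb}, and the final estimate via Lemma~\ref{Lemsin}. The only (minor) difference is that you compute the sine-block minimum explicitly as $\sin^{2}(\pi/(2n+2))$, whereas the paper simply observes that $B_{n\times n}$ is a principal submatrix of $A_{n+1}$ and therefore has its largest eigenvalue bounded by $\mu^{*}$; both arguments give the same bottom line.
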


\begin{proof}
By Lemma \ref{Lemevenodd} it suffices to show that 
\begin{equation}
\left\langle x_{2}^{2}f_{2m},f_{2m}\right\rangle _{L^2(\mathbb{S}^{1})}\geq 
\frac{ \pi ^{2}}{4\left( 2m+3\right) ^{2}}\left\langle
f_{2m},f_{2m}\right\rangle_{L^2(\mathbb{S}^{1})}.  \label{eqmain3}
\end{equation}%
Clearly (\ref{eqmain3}) implies (\ref{eqmain2}) for even indices. Now (\ref%
{eqmain3}) and Lemma \ref{Lemevenodd} show that 
\begin{equation*}
\left\langle x_{2}^{2}f_{2m+1},f_{2m+1}\right\rangle _{L^2(\mathbb{S}%
^{1})}\geq \frac{\pi ^{2}}{4\left( 2m+2+3\right) ^{2}}\left\langle
f_{2m+1},f_{2m+1}\right\rangle _{L^2(\mathbb{S}^{1})}
\end{equation*}%
and (\ref{eqmain2}) holds also for odd indices. We use now polar coordinates 
$x=r\cos t$ and $y=r\sin t.$ An orthonormal basis of spherical harmonics
(restrictions of harmonic homogeneous polynomials $h_{\kappa }\left(
x,y\right) $ of degree $\kappa \geq 1$ to the unit circle) is given by 
\begin{equation*}
Y_{\kappa ,0}\left( t\right) :=\frac{1}{\sqrt{\pi }}\cos \kappa t\text{ and }%
Y_{\kappa ,1}\left( t\right) =\frac{1}{\sqrt{\pi }}\sin \kappa t
\end{equation*}%
for $\kappa \geq 1$, and for $\kappa =0$ we define $Y_{0,0}\left( t\right)
=1/\sqrt{2\pi }.$ It is convenient to set $Y_{0,1}(t) :=0.$ For every
integer $\kappa $ the following identities hold: 
\begin{eqnarray}
-4\sin ^{2}t\cdot \cos \kappa t &=&\cos \left( \kappa +2\right) t-2\cos
\kappa t+\cos \left( \kappa -2\right) t.  \label{id1} \\
-4\sin ^{2}t\cdot \sin \kappa t &=&\sin \left( \kappa +2\right) t-2\sin
\kappa t+\sin \left( \kappa -2\right) t.  \label{id2}
\end{eqnarray}%
For $k\geq 1$, replace $\kappa $ with $2k\geq 1$ in the identities (\ref{id1}%
) and (\ref{id2}) and multiply by $1/\sqrt{\pi }$. If $k\geq 2$, so $%
2k-2\geq 1$, we have 
\begin{equation}
-4\sin ^{2}t\cdot Y_{2k,s}=Y_{2k+2,s}-2Y_{2k,s}+Y_{2k-2,s}\text{.}
\label{id4}
\end{equation}%
If $k=1$ and $s=1$ then (\ref{id4}) holds using the convention that $%
Y_{0,1}=0.$ For $k=1$ and $s=0$ we obtain 
\begin{equation}
-4\sin ^{2}t\cdot Y_{2,0}=Y_{4,0}-2Y_{2,0}+\sqrt{2}Y_{0,0}\text{.}
\label{id5}
\end{equation}%
For the case $k=0$, formula (\ref{id1}) leads to 
\begin{equation}
-4\sin ^{2}t\cdot Y_{0,0}=\sqrt{2}Y_{2,0}-2Y_{0,0}.  \label{id6}
\end{equation}
By Lemma \ref{LemReduceHarmonic} it is enough to prove the inequality for $%
h\in H_{2m}$. Let us write 
\begin{equation}
h \left( \cos t,\sin t\right)
=\sum_{s=0}^{1}\sum_{k=0}^{m}c_{k,s}Y_{2k,s}\left( t\right)  \label{combin}
\end{equation}
with complex coefficients $c_{k,s}$ (using the convention that $Y_{0,1}=0).$
With (\ref{combin}) we arrive at 
\begin{equation}  \label{painfulsum}
-4\left\langle \sin ^{2}t \cdot h , h \right\rangle _{\mathbb{S}
^{1}}=\sum_{k=0}^{m}\sum_{s=0}^{1}\sum_{k_{1}=0}^{m}%
\sum_{s_{1}=0}^{1}c_{k,s} \overline{c_{k_{1},s_{1}}}\left\langle -4 \sin
^{2}t\cdot Y_{2k,s},Y_{2k_{1},s_{1}}\right\rangle _{L^2(\mathbb{S}^{1})}.
\end{equation}
Since $\left\langle \sin ^{2}t\cdot Y_{2k,s}\left( t\right)
,Y_{2k_{1},s_{1}}\left( t\right) \right\rangle _{L^2(\mathbb{S}^{1})}=0$ for 
$s\neq s_{1}$ we can express 
\begin{equation}
-4\left\langle \sin ^{2}t\cdot h , h \right\rangle _{L^2(\mathbb{S}^{1})}
=\Sigma _{0}+\Sigma _{1}
\end{equation}
where 
\begin{equation*}
\Sigma _{s}=\sum_{k=0}^{m}\sum_{k_{1}=0}^{m}c_{k,s}\overline{c_{k_{1},s}}
\left\langle -4 \sin ^{2}t\cdot Y_{2k,s},Y_{2k_{1},s}\right\rangle_{L^2(%
\mathbb{S}^{1})}
\end{equation*}
for $s=0,1.$ Let us write $c_{0}=\left( c_{0,0},\dots .,c_{m,0}\right) $. We
see from (\ref{id4}), (\ref{id5}) and (\ref{id6}) that the matrix
representing the operator ``multiplication by $- 4 \sin^2 t$'' on the space
generated by $\{Y_{0,0}, \dots, Y_{2m,0}\}$ (with respect to that basis) is $%
-2 I_{(m+1)\times (m + 1)} + A_{m+1}$, where $I_{(m+1)\times (m + 1)}$ is
the identity matrix and $A_{m + 1}$ the $(m + 1)\times ( m + 1)$-matrix 
\begin{equation*}
A_{m+1} =\left( 
\begin{array}{ccccc}
0 & \sqrt{2} &  &  &  \\ 
\sqrt{2} & 0 & 1 &  &  \\ 
& 1 & 0 & \ddots &  \\ 
&  & \ddots & \ddots & 1 \\ 
&  &  & 1 & 0%
\end{array}
\right) .
\end{equation*}
It now follows from (\ref{painfulsum}) that 
\begin{equation*}
\Sigma _{0}=-2c_{0}^{t}\overline{c_{0}}+c_{0}^{t} \ A_{m+1} \ \overline{c_{0}%
}.
\end{equation*}
Similarly, recalling that $Y_{0,1}=0,$ we see from (\ref{id4}), which is
valid for $k\geq 1$, that 
\begin{equation*}
\Sigma _{1}=-2c_{1}^{t}\overline{c_{1}}+c_{1}^{t} \ B_{m\times m } \ 
\overline{c_{1}}
\end{equation*}%
for $c_{1}=\left( c_{1,1},\dots ,c_{m,1}\right) ^{t},$ where $B_{m\times m }$
is the matrix 
\begin{equation*}
B_{m\times m }=\left( 
\begin{array}{cccc}
0 & 1 &  &  \\ 
1 & 0 & \ddots &  \\ 
& \ddots & \ddots & 1 \\ 
&  & 1 & 0%
\end{array}%
\right) .
\end{equation*}%
Let $\mu ^{\ast }$ be the maximal eigenvalue of $A_{m+1} .$ Then%
\begin{equation*}
c_{0}^{t} \ A_{m+1} \ \overline{c_{0}}\leq \mu ^{\ast }c_{0}^{t}\overline{%
c_{0}}
\end{equation*}%
for all $c_{0}\in \mathbb{R}^{m+1}.$ Since $B_{m\times m }$ is a submatrix
of $A_{m+1} $ we infer that%
\begin{equation*}
c_{1}^{t}B_{m\times m }\overline{c_{1}}\leq \mu ^{\ast }c_{1}^{t}\overline{%
c_{1}}
\end{equation*}%
for all $c_{1}\in \mathbb{R}^{m}.$ Thus 
\begin{equation}
-4\left\langle \sin ^{2}t\cdot h , h \right\rangle _{\mathbb{S} ^{1}}=\Sigma
_{0}+\Sigma _{1}\leq \left( \mu ^{\ast }-2\right) \left( c_{0}^{t}\overline{%
c_{0}} + c_{1}^{t}\overline{c_{1}}\right) .  \label{ineqdivid}
\end{equation}
By Proposition \ref{PropCheb}, 
\begin{equation*}
P_{m+1} := \left( \lambda \right) \det \left( A_{m+1} -\lambda I \right)
=2T_{m+1}\left( -\frac{\lambda }{2} \right) ,
\end{equation*}
where $T_{n}$ is the Chebyshev polynomial of degree $n$. The zeros of $%
T_{m+1}$ are given by $x_{k}:=\cos \left( \frac{2k-1}{m+1}\frac{\pi }{2}%
\right) $ for $k=1,\dots ,m+1.$ Since $T_{m+1}\left( x\right) =\frac{1}{2}%
P_{m+1}\left( -2x\right) $ clearly $-2x_{k}$ with $k=1,\dots ,m$ are the
roots of the polynomial $P_{m+1}.$ The largest zero $\mu $ of $P_{m+1}$ is
then 
\begin{equation*}
\mu ^{\ast }=-2\cos \left( \frac{2m+1}{m+1}\frac{\pi }{2}\right) .
\end{equation*}%
Dividing the inequality (\ref{ineqdivid}) by $-4$ we obtain 
\begin{equation*}
\left\langle \sin ^{2}t\cdot h , h \right\rangle _{L^1(\mathbb{S} ^{1})}\geq 
\frac{1}{2}\left( 1+\cos \frac{\left( m+\frac{1}{2}\right) \pi }{ m+1}%
\right) \left(c_{0}^{t}\overline{c_{0}} + c_{1}^{t}\overline{c_{1}} \right) .
\end{equation*}
Since $\cos \left( x\pi \right) =-\cos \left( x-1\right) \pi $ we obtain for 
$x=\frac{m+\frac{1}{2}}{m+1}$ 
\begin{equation*}
\cos \left( \frac{m+\frac{1}{2}}{m+1}\pi \right) =-\cos \left( \frac{1}{m+1} 
\frac{\pi }{2}\right) .
\end{equation*}
Now the identity $\frac{1}{2}\left( 1-\cos y\right) =\sin ^{2}\frac{y}{2}$,
together with Lemma \ref{Lemsin} and $\left\langle h , h \right\rangle _{L^2(%
\mathbb{S}^{1})} = \left(c_{0}^{t} \overline{c_{0}} + c_{1}^{t}\overline{%
c_{1}}\right)$, yield 
\begin{eqnarray*}
\left\langle \sin ^{2}t\cdot h, h \right\rangle _{L^2(\mathbb{S}^{1})} &\geq
&\left(c_{0}^{t} \overline{c_{0}} + c_{1}^{t}\overline{c_{1}}\right) \left(
\sin ^{2}\frac{ \pi }{4m+4}\right) \\
&\ge&\left\langle h , h \right\rangle _{L^2(\mathbb{S}^{1})}\cdot \left( 
\frac{\pi }{4m+4+2}\right) ^{2}.
\end{eqnarray*}
Thus we have 
\begin{equation*}
\left\langle \sin ^{2}t\cdot h , h \right\rangle _{L^1(\mathbb{S} ^{1})}\geq
\left\langle h , h \right\rangle _{L^2( \mathbb{S}^{1})}C_{2m} \text{ with }%
C_{2m}=\frac{\pi ^{2}}{4}\frac{1}{\left( 2m+3\right) ^{2}}.
\end{equation*}
\end{proof}


\smallskip 
\noindent 
{\bf Data availability}   Data sharing is not applicable to this article since no data sets were generated or analyzed.

\smallskip 
\noindent 
{\bf Declarations} 

\smallskip \noindent 
{\bf Conflict of interest}  The authors declare that they have no competing interests.

\end{document}